\documentclass[10pt]{article}
\usepackage[a4paper,margin=.8in]{geometry}
\usepackage[T1]{fontenc}
\usepackage[utf8]{inputenc}
\usepackage{lmodern}
\usepackage{microtype}
\usepackage{amsmath,amssymb,amsthm,mathtools}
\usepackage[mathlines,pagewise]{lineno}
\usepackage{aliascnt}
\usepackage{enumitem}
\usepackage{titling}
\usepackage[margin=1cm]{caption}
\usepackage[hang,flushmargin]{footmisc}
\usepackage[hidelinks]{hyperref}
\hypersetup{
  pdftitle={Proper \{a,b\}-edge-weightings of trees},
  pdfauthor={Péter Madarasi},
  pdfsubject={Neighbor-sum-distinguishing edge-weightings of trees},
  pdfkeywords={edge-weighting, tree, perfect matching, linear-time algorithm}
}
\usepackage[nameinlink,noabbrev,capitalise,sort&compress]{cleveref}
\usepackage[yyyymmdd]{datetime}
\usepackage{tikz}
\usetikzlibrary{arrows.meta}
\makeatletter
\edef\ReferenceTool@CleverefVersion{\csname ver@cleveref.sty\endcsname}%
\def\ReferenceTool@BrokenCleverefVersion{2018/03/27 v0.21.4 Intelligent cross-referencing}%
\ifx\ReferenceTool@CleverefVersion\ReferenceTool@BrokenCleverefVersion
  \let\cpageref\relax
  \DeclareRobustCommand{\cpageref}{%
    \@ifstar{\@crefstar{cpageref}}{\@cref{cpageref}}}%
  \let\Cpageref\relax
  \DeclareRobustCommand{\Cpageref}{%
    \@ifstar{\@crefstar{Cpageref}}{\@cref{Cpageref}}}%
  \providecommand*{\@setcpagerefrange}[3]{%
    \@@setcpagerefrange{#1}{#2}{cref}{#3}}%
  \providecommand*{\@setCpagerefrange}[3]{%
    \@@setcpagerefrange{#1}{#2}{Cref}{#3}}%
  \providecommand*{\@setlabelcpagerefrange}[3]{%
    \@@setcpagerefrange{#1}{#2}{labelcref}{#3}}%
\fi
\makeatother

\AtBeginDocument{%
}

\newcommand{\DeclareEquationCrefFormat}[1]{%
  \crefformat{#1}{##2\textup{(##1)}##3}%
  \Crefformat{#1}{##2\textup{(##1)}##3}%
  \crefrangeformat{#1}{##3\textup{(##1)}##4\nobreakdash--##5\textup{(##2)}##6}%
  \Crefrangeformat{#1}{##3\textup{(##1)}##4\nobreakdash--##5\textup{(##2)}##6}%
  \crefmultiformat{#1}%
    {##2\textup{(##1)}##3}%
    { and~##2\textup{(##1)}##3}%
    {, ##2\textup{(##1)}##3}%
    {, and~##2\textup{(##1)}##3}%
  \Crefmultiformat{#1}%
    {##2\textup{(##1)}##3}%
    { and~##2\textup{(##1)}##3}%
    {, ##2\textup{(##1)}##3}%
    {, and~##2\textup{(##1)}##3}%
  \crefrangemultiformat{#1}%
    {##3\textup{(##1)}##4\nobreakdash--##5\textup{(##2)}##6}%
    { and~##3\textup{(##1)}##4\nobreakdash--##5\textup{(##2)}##6}%
    {, ##3\textup{(##1)}##4\nobreakdash--##5\textup{(##2)}##6}%
    {, and~##3\textup{(##1)}##4\nobreakdash--##5\textup{(##2)}##6}%
  \Crefrangemultiformat{#1}%
    {##3\textup{(##1)}##4\nobreakdash--##5\textup{(##2)}##6}%
    { and~##3\textup{(##1)}##4\nobreakdash--##5\textup{(##2)}##6}%
    {, ##3\textup{(##1)}##4\nobreakdash--##5\textup{(##2)}##6}%
    {, and~##3\textup{(##1)}##4\nobreakdash--##5\textup{(##2)}##6}%
  \labelcrefformat{#1}{##2\textup{(##1)}##3}%
  \labelcrefrangeformat{#1}{##3\textup{(##1)}##4\nobreakdash--##5\textup{(##2)}##6}%
  \labelcrefmultiformat{#1}%
    {##2\textup{(##1)}##3}%
    { and~##2\textup{(##1)}##3}%
    {, ##2\textup{(##1)}##3}%
    {, and~##2\textup{(##1)}##3}%
  \labelcrefrangemultiformat{#1}%
    {##3\textup{(##1)}##4\nobreakdash--##5\textup{(##2)}##6}%
    { and~##3\textup{(##1)}##4\nobreakdash--##5\textup{(##2)}##6}%
    {, ##3\textup{(##1)}##4\nobreakdash--##5\textup{(##2)}##6}%
    {, and~##3\textup{(##1)}##4\nobreakdash--##5\textup{(##2)}##6}%
}
\DeclareEquationCrefFormat{equation}
\DeclareEquationCrefFormat{subequation}

\newcommand{\DeclareItemCrefFormat}[1]{%
  \crefformat{#1}{##2\textup{##1}##3}%
  \Crefformat{#1}{##2\textup{##1}##3}%
  \crefrangeformat{#1}{##3\textup{##1}##4\nobreakdash--##5\textup{##2}##6}%
  \Crefrangeformat{#1}{##3\textup{##1}##4\nobreakdash--##5\textup{##2}##6}%
  \crefmultiformat{#1}%
    {##2\textup{##1}##3}{ and~##2\textup{##1}##3}%
    {, ##2\textup{##1}##3}{, and~##2\textup{##1}##3}%
  \Crefmultiformat{#1}%
    {##2\textup{##1}##3}{ and~##2\textup{##1}##3}%
    {, ##2\textup{##1}##3}{, and~##2\textup{##1}##3}%
  \crefrangemultiformat{#1}%
    {##3\textup{##1}##4\nobreakdash--##5\textup{##2}##6}%
    { and~##3\textup{##1}##4\nobreakdash--##5\textup{##2}##6}%
    {, ##3\textup{##1}##4\nobreakdash--##5\textup{##2}##6}%
    {, and~##3\textup{##1}##4\nobreakdash--##5\textup{##2}##6}%
  \Crefrangemultiformat{#1}%
    {##3\textup{##1}##4\nobreakdash--##5\textup{##2}##6}%
    { and~##3\textup{##1}##4\nobreakdash--##5\textup{##2}##6}%
    {, ##3\textup{##1}##4\nobreakdash--##5\textup{##2}##6}%
    {, and~##3\textup{##1}##4\nobreakdash--##5\textup{##2}##6}%
  \labelcrefformat{#1}{##2\textup{##1}##3}%
  \labelcrefrangeformat{#1}{##3\textup{##1}##4\nobreakdash--##5\textup{##2}##6}%
  \labelcrefmultiformat{#1}%
    {##2\textup{##1}##3}{ and~##2\textup{##1}##3}%
    {, ##2\textup{##1}##3}{, and~##2\textup{##1}##3}%
  \labelcrefrangemultiformat{#1}%
    {##3\textup{##1}##4\nobreakdash--##5\textup{##2}##6}%
    { and~##3\textup{##1}##4\nobreakdash--##5\textup{##2}##6}%
    {, ##3\textup{##1}##4\nobreakdash--##5\textup{##2}##6}%
    {, and~##3\textup{##1}##4\nobreakdash--##5\textup{##2}##6}%
}
\DeclareItemCrefFormat{enumi}
\DeclareItemCrefFormat{enumii}
\DeclareItemCrefFormat{enumiii}
\DeclareItemCrefFormat{enumiv}

\theoremstyle{plain}
\newtheorem{theorem}{Theorem}

\newaliascnt{lemma}{theorem}
\newtheorem{lemma}[lemma]{Lemma}
\aliascntresetthe{lemma}

\newaliascnt{proposition}{theorem}

\aliascntresetthe{proposition}

\newaliascnt{corollary}{theorem}
\newtheorem{corollary}[corollary]{Corollary}
\aliascntresetthe{corollary}

\newaliascnt{claim}{theorem}

\aliascntresetthe{claim}

\theoremstyle{definition}
\newaliascnt{definition}{theorem}

\aliascntresetthe{definition}

\theoremstyle{remark}
\newaliascnt{remark}{theorem}

\aliascntresetthe{remark}

\newcommand{\DeclareNamedCrefType}[3]{%
  \crefname{#1}{#2}{#3}%
  \Crefname{#1}{#2}{#3}%
  \crefrangelabelformat{#1}{##3##1##4\nobreakdash--##5##2##6}%
}

\DeclareNamedCrefType{section}{Section}{Sections}
\DeclareNamedCrefType{subsection}{Section}{Sections}
\DeclareNamedCrefType{subsubsection}{Section}{Sections}
\DeclareNamedCrefType{appendix}{Appendix}{Appendices}
\DeclareNamedCrefType{figure}{Figure}{Figures}
\DeclareNamedCrefType{table}{Table}{Tables}
\DeclareNamedCrefType{footnote}{Footnote}{Footnotes}
\DeclareNamedCrefType{page}{Page}{Pages}

\DeclareNamedCrefType{theorem}{Theorem}{Theorems}
\DeclareNamedCrefType{lemma}{Lemma}{Lemmas}
\DeclareNamedCrefType{proposition}{Proposition}{Propositions}
\DeclareNamedCrefType{corollary}{Corollary}{Corollaries}
\DeclareNamedCrefType{claim}{Claim}{Claims}
\DeclareNamedCrefType{definition}{Definition}{Definitions}
\DeclareNamedCrefType{remark}{Remark}{Remarks}

\makeatletter
\@onlypreamble\DeclareEquationCrefFormat
\@onlypreamble\DeclareItemCrefFormat
\@onlypreamble\DeclareNamedCrefType
\makeatother

\newcommand{\R}{\mathbb{R}}
\newcommand{\Q}{\mathbb{Q}}

\usepackage[textsize=tiny]{todonotes}

\newcommand{\Cfam}{\mathcal{C}}
\newcommand{\degthree}{D_3}
\newcommand{\side}{\operatorname{side}}

\title{
  \vspace*{-25pt}
  Proper $\{a,b\}$-edge-weightings of trees
}
\author{
  P\'eter Madarasi\thanks{HUN-REN Alfr\'ed R\'enyi Institute of Mathematics, Re\'altanoda u.\ 13--15, Budapest H-1053, Hungary; and Department of Operations Research, ELTE E\"otv\"os Lor\'and University, P\'azm\'any P.\ s.\ 1/c, Budapest H-1117, Hungary. E-mail: \texttt{madarasi@renyi.hu}}
}
\date{\vspace{-.5cm}}

\begin{document}
\maketitle

\begin{abstract}
  Let $a$ and $b$ be distinct real weights.
  An $\{a,b\}$-edge-weighting of a tree assigns one of these weights to each edge and is proper if adjacent vertices have different sums of incident edge weights.
  For every such pair, we give an explicit structural characterization of the trees that do not admit a proper $\{a,b\}$-edge-weighting.
  If $ab(a+b)\neq0$, then $K_2$ is the only tree without such a weighting.
  If $a+b=0$, then a tree has no proper $\{a,b\}$-edge-weighting exactly when every vertex has degree $1$ or $3$ and the subgraph induced by the degree-$3$ vertices has a perfect matching.
  For the remaining case $ab=0$, form the spanning forest consisting of the edges whose deletion leaves two odd-order components.
  A tree $T$ has no proper $\{a,b\}$-edge-weighting exactly when both bipartition classes have odd order and every component of this forest satisfies two conditions.
  First, every component satisfies the preceding degree-and-matching condition.
  Second, within each component, the degree of a vertex $v$ in the forest plus twice the number of incident edges $e$ outside the forest for which the component of $T-e$ not containing $v$ has an odd number of vertices from each bipartition class is independent of $v$.
  For every fixed pair of distinct real weights, the proofs yield a linear-time algorithm that decides whether a proper $\{a,b\}$-edge-weighting exists and constructs one when it does.

  \medskip\noindent\textbf{Keywords:} vertex-coloring edge-weighting; neighbor-sum-distinguishing edge-weighting; two edge weights; trees; odd-cut skeleton; perfect matching; linear-time algorithm. 
\end{abstract}

\section{Introduction}\label{sec:introduction}

For a set $W$ of edge weights, a \emph{$W$-edge-weighting} of a graph $G$ is a map $\omega:E(G)\to W$.
For $v\in V(G)$, write $N_G(v)$ and $E_G(v)$ for the sets of neighbors and incident edges of $v$, respectively, and write $d_G(v)=|N_G(v)|$.
The weighting $\omega$ induces the \emph{weighted degree} $s_\omega(v)=\sum_{e\in E_G(v)}\omega(e)$ at each vertex $v$.
The weighting is \emph{proper} if $s_\omega(u)\neq s_\omega(v)$ for every edge $uv$; equivalently, the weighted degrees form a proper vertex coloring.
Karo\'nski, \L{}uczak, and Thomason initiated this line of research with the $1$-$2$-$3$ conjecture, which asserts that every graph without an isolated edge admits a proper $\{1,2,3\}$-edge-weighting~\cite{karonski2004edge}.
Keusch proved the conjecture in 2024~\cite{keusch2024solution}.
We study the corresponding existence problem for a fixed pair of distinct real weights on trees.
We complete the classification for all such pairs and give direct structural descriptions of the trees that admit no proper weighting.

To state the two exceptional cases, for a graph $G$ let $\degthree(G)$ denote the subgraph induced by the degree-$3$ vertices in $G$.
We use the convention that the empty graph has a perfect matching.
Let $\Cfam$ be the family of trees $H$ such that every vertex of $H$ has degree $1$ or $3$ and $\degthree(H)$ has a perfect matching.
For a tree $T$, the \emph{odd-cut skeleton} $J(T)$, or \emph{skeleton} for short, is the spanning forest consisting of the edges whose deletion leaves two odd-order components.
Because $J(T)$ is spanning, a vertex incident with no skeleton edge is an isolated vertex of $J(T)$ and has degree $0$.
Suppose that $T$ has bipartition $X\cup Y$ with $|X|$ and $|Y|$ odd.
For every nonskeleton edge, the two components obtained by deleting the edge have even order, and exactly one contains an odd number of vertices from each of $X$ and $Y$.
Orient the edge toward this \emph{odd--odd component}.
Let $\ell_T(v)$ be the number of nonskeleton edges directed away from $v$, and define the \emph{adjusted load} $\lambda_T(v)=d_{J(T)}(v)+2\ell_T(v)$.
The adjusted load assigns one unit to each incident skeleton edge and two units to each nonskeleton edge directed away from $v$.
Our main result is the following.
\begin{theorem}\label{thm:main}
  Let $a,b\in\R$ be distinct, and let $T$ be a tree.
  The following statements characterize when $T$ admits no proper $\{a,b\}$-edge-weighting.
  \begin{enumerate}[label=(\roman*),ref=(\roman*),leftmargin=*]
    \item\label{case:generic} If $ab(a+b)\neq0$, then $T$ admits no proper $\{a,b\}$-edge-weighting if and only if $T\cong K_2$.
    \item\label{case:opposite} If $a+b=0$, then $T$ admits no proper $\{a,b\}$-edge-weighting if and only if $T\in\Cfam$.
    \item\label{case:zero} If $ab=0$, then $T$ admits no proper $\{a,b\}$-edge-weighting if and only if both bipartition classes have odd order and, for every component $H$ of $J(T)$, the tree $H$ belongs to $\Cfam$ and $\lambda_T$ is constant on $V(H)$.
  \end{enumerate}
\end{theorem}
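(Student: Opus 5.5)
The plan is to normalize the weights first and then treat the three cases separately. A weighting $\omega$ with values in $\{a,b\}$ satisfies $s_\omega(v)=a\,d_T(v)+(b-a)k(v)$, where $k(v)$ is the number of $b$-edges at $v$. Scaling both weights by a nonzero constant does not change properness. Hence in case~\ref{case:opposite} we may take $\{a,b\}=\{-1,1\}$, and in case~\ref{case:zero} we may take $\{a,b\}=\{0,1\}$. In the latter case a proper weighting is just an edge set $F$ whose degree function $d_F$ is a proper vertex colouring of $T$.

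\emph{Case~\ref{case:generic}.} The tree $K_2$ trivially fails. For any other tree, root $T$ at a vertex $r$ of degree at least $2$ and assign weights top-down along a BFS order. When we reach $v$, its parent edge is already fixed, and so is $s_\omega(p)$ for its parent $p$, because all edges at $p$ were fixed when $p$ was processed. If $v$ has $c\ge1$ children, the number $j$ of child edges weighted $b$ produces $c+1$ distinct values of $s_\omega(v)$. We must pick $j$ so that $s_\omega(v)\neq s_\omega(p)$ and so that $s_\omega(v)$ differs from the weight of every leaf edge at $v$; a leaf $u$ has $s_\omega(u)=\omega(uv)$. We have freedom in which child edges receive $b$ for a given $j$, and in the choice of $r$. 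The hypothesis $ab(a+b)\neq0$ enters exactly here: it excludes the coincidences $s_\omega(v)=a$ or $s_\omega(v)=b$ that would be forced for small $c$, namely $0$, $a+b$, $2a$ versus $a$, $b$. I expect a short case check on $c\in\{1,2\}$ to finish this case.

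\emph{Case~\ref{case:opposite}.} With weights $\pm1$ we have $s_\omega(v)\equiv d_T(v)\pmod 2$, so only edges joining vertices of equal degree parity can be violated.

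For necessity, let $T\in\Cfam$ and fix a perfect matching $M$ of $\degthree(T)$. Every leaf $x$ with neighbour $u$ forces $s_\omega(u)\neq\omega(xu)$. A degree-$3$ vertex has $s_\omega\in\{\pm1,\pm3\}$, and I will show that the constraints at the two ends of each edge of $M$, together with their leaves, cannot all be satisfied. I would do this by induction, contracting or deleting a matched pair whose remaining neighbours are mostly leaves; such a pair exists by taking an end of a longest path in $\degthree(T)$.

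For sufficiency, the tree has either a vertex of even degree, a vertex of degree at least $5$, or no perfect matching of $\degthree(T)$. Each of these gives slack. I would again use the top-down greedy argument. In the matching-failure subcase I would use the Tutte/Gallai structure in trees: a greedy leaf-matching in the tree $\degthree(T)$ leaves an unmatched vertex, and we start the construction there.

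\emph{Case~\ref{case:zero}.} For necessity of the parity condition, first suppose $|X|$ or $|Y|$ is even; I will construct $F$ directly, for example by taking a suitable $T$-join, where every vertex has odd $F$-degree in one class. For the remainder, I would analyse how $F$ restricted to nonskeleton edges interacts with parity. A nonskeleton edge $e$ splits $T$ into even parts, and the oriented odd--odd side determines, via the sum identity $\sum_{X}d_F=\sum_{Y}d_F$ applied to that side, whether $e$ must be in $F$ modulo the internal degrees. This reduces the problem on each component $H$ of $J(T)$ to a $\{\pm1\}$-type problem, with the vertex loads shifted by $2\ell_T(v)$. The condition that $\lambda_T$ is constant on $V(H)$ is precisely the condition that these shifts are uniform, so case~\ref{case:opposite} applied to $H$ yields impossibility. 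Conversely, if $\lambda_T$ is non-constant or some $H\notin\Cfam$, the construction of case~\ref{case:opposite} on $H$ lifts to $T$. The main obstacle is making this reduction rigorous: proving that every proper $\{0,1\}$-weighting of $T$ induces, componentwise, a proper $\pm1$-weighting of $H$ with the $\lambda_T$-shift, and conversely. I would first attempt this via the identity $d_F(v)=\tfrac12\bigl(d_T(v)+\sigma(v)\bigr)$, where $\sigma(v)$ is the signed sum obtained by setting $\omega=2\chi_F-1$, and track the nonskeleton contributions through the orientation.
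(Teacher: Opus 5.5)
Your case~(i) argument is fine. The top-down choice of $j$ is the same local check as in the paper, whose linear-time version also restores the leaf sets top-down, and the cases $c\in\{1,2\}$ work out as you predict.

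\textbf{Case (ii), sufficiency.} This is the first genuine gap. A top-down greedy that looks only at the parent and at leaf children fails for $\{-1,1\}$, because a non-leaf branch can force the value at its root. Suppose $y$ has degree $3$, with parent $z$, one leaf, and a neighbour $x$ carrying two leaves. Then every weighting that is proper on the four edges below $y$ has $s_\omega(y)=\omega(zy)$, so this branch behaves exactly like a leaf. Your rule at $z$ sees no constraint from $y$. If $z$ also has a parent $p$ and a leaf $t$, the choice $\omega(pz)=\omega(zy)=1$, $\omega(zt)=-1$ passes your test whenever $s_\omega(p)\neq1$. It gives $s_\omega(z)=1=\omega(zy)$ and cannot be completed. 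Such trees need not lie in $\Cfam$: take $p$ of degree $2$. Likewise, the slack from an even-degree vertex, a vertex of degree $\geq5$, or an unmatched vertex of $\degthree(T)$ is not local, so starting the construction there guarantees nothing further down.

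What is missing is bottom-up information, which the paper supplies:
\begin{itemize}
\item the set $\mathcal A(T_i,r_i)$ of values attainable at each branch root under the normalization $\omega(r_iu_i)=1$;
\item the recurrence $\mathcal A(T,r)=\bigcup\Phi(q_1,\dots,q_k)$ over $q_i\in\mathcal A(T_i,r_i)$;
\item the fact that the local signing set $\Phi$ is never empty;
\item the characterization that $\Phi=\{1\}$ only when $k=0$ or $\{q_1,q_2\}=\{1,3\}$, and $\Phi=\{3\}$ only when $q_1=q_2=1$.
\end{itemize}
Together these give $\mathcal A(T,r)=\{1\}\iff T\in\Cfam$, and hence both directions at once. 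Your necessity sketch (collapse the gadget above to a leaf) can be completed, but it gives nothing for sufficiency.

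\textbf{Case (iii), the reduction.} The reduction is not correct as stated. Nothing forces the weights of nonskeleton edges. The identity $s_\omega(v)=\tfrac12\lambda_T(v)+\tfrac12 s_\eta(v)$ on a component $H$ of $J(T)$, with $\eta=2\omega-1$ on $E(H)$, holds only if every nonskeleton edge at $H$ has weight $1$ when directed away from $H$ and weight $0$ when directed toward it. So an arbitrary or proper weighting does not induce a shifted $\pm1$-weighting on every component. The paper gets one such component for every $\omega$ by pigeonhole on the tree $Q$ obtained by contracting the components of $J(T)$. Each nonskeleton edge, oriented toward its odd--odd side, charges its tail if its weight is $0$ and its head if its weight is $1$. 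The $|V(Q)|-1$ charges leave some component uncharged, and there $H\in\Cfam$ together with constant $\lambda_T$ forces a conflict.

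\textbf{Case (iii), the converse.} Lifting a proper $\pm1$-weighting of one $H\notin\Cfam$ controls neither the other components nor the nonskeleton edges. When $H\in\Cfam$ but $\lambda_T$ is nonconstant, there is nothing to lift. The missing tool is the one you use only in the even-class case: the unique weighting $\omega_P$ with prescribed odd set $P$. Under it, every edge whose endpoints differ in $P$-membership is proper by parity. The paper uses it three ways:
\begin{itemize}
\item With $P=X\mathbin{\triangle}V(A)$ or $Y\mathbin{\triangle}V(A)$ for a skeleton edge $xy$ (where $A$ is the side containing $x$), only $xy$ remains to check. Its endpoint sums are $\alpha_T(x)\pm1$ and $\alpha_T(y)\pm1$ (respectively $\beta_T(x)\pm1$ and $\beta_T(y)\pm1$), with the same shift at both ends. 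Hence an obstruction has $\pi_T$ constant along $J(T)$.
\item With $P=\side_T(v)\mathbin{\triangle}U$ around three same-type skeleton edges at $v$, one rules out skeleton degree at least $5$.
\item A separate lemma converts ``skeleton degrees in $\{1,3\}$ and $\pi_T$ constant on $H$'' into ``$H\in\Cfam$ and $\lambda_T$ constant on $H$''.
\end{itemize}
Without these pieces, neither direction of case~(iii) is established.
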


The theorem reveals a sharp distinction between the generic regime and the two exceptional regimes.
It also exposes a structural link between the exceptional regimes, since every component of the odd-cut skeleton of a zero-weight obstruction is itself an opposite-weight obstruction.
The zero-weight case additionally requires both bipartition classes to have odd order and the adjusted load to be constant on each skeleton component.
Because $a\neq b$, exactly one of the three cases in \cref{thm:main} applies.
We call them the \emph{generic regime}, \emph{opposite-weight regime}, and \emph{zero-weight regime}, respectively.
For a fixed tree, exchanging the two weights or multiplying both by the same nonzero scalar does not affect the existence of a proper weighting.
Thus the opposite-weight and zero-weight regimes may be normalized to $\{-1,1\}$ and $\{0,1\}$, respectively.
The three regimes require different arguments.
In the generic regime, the proof uses a leaf-extension argument.
In the opposite-weight regime, a recurrence for the possible weighted degrees at the neighbor of a fixed leaf yields the perfect-matching condition.
In the zero-weight regime, prescribing the parity of every weighted degree isolates the odd-cut skeleton, while a charging argument on its components reduces the remaining constraints to the opposite-weight classification.
Across the three regimes, the proofs yield algorithms that, for every fixed pair of distinct weights, decide whether a proper weighting exists and construct one in $O(n)$ time using $O(n)$ space.
An implementation of these algorithms for rational weights is available at \url{https://github.com/madarasip/ab-edge-weighting-tree}.
We next place these results in the context of earlier work.

\medskip
Khatirinejad, Naserasr, Newman, Seamone, and Stevens proved that every tree with at least three vertices has a proper $\{a,b\}$-edge-weighting when $a$ and $b$ have the same sign~\cite[Corollary~2.7]{khatirinejad2012twoedgeweights}.
Their results also imply the same conclusion for nonzero pairs with irrational ratio by transferring a weighting for a same-sign pair to the target pair~\cite[Proposition~2.1(ii) and Corollary~2.7]{khatirinejad2012twoedgeweights}.
The only generic pairs not covered are therefore the mixed-sign pairs with rational ratio and nonzero sum.
\Cref{thm:generic} settles this remaining range.

For the opposite pair $\{-1,1\}$, Bensmail, Mc Inerney, and Lyngsie gave a recursive description of the trees without a proper $\{-1,1\}$-edge-weighting~\cite[Theorem~1.2]{bensmail2022oddab}.
\Cref{thm:opposite} gives an independent direct characterization in terms of the perfect-matching condition on the subgraph induced by the degree-$3$ vertices.
Together, \cref{thm:generic,thm:opposite} recover the complete tree classification in~\cite{bensmail2022oddab} for every pair $\{a,a+2\}$ where $a$ is an odd integer.

Lyngsie gave a recursive description of all trees without a proper $\{0,1\}$-edge-weighting and a polynomial-time recognition algorithm~\cite[Theorem~4 and Section~3]{lyngsie2018nsd01}.
\Cref{thm:odd-cut-skeleton} gives a direct characterization in terms of the odd-cut skeleton, the family $\Cfam$, and the adjusted load.
\Cref{cor:odd-cut-matching-characterization} records the equivalent degree, matching, and load conditions.
Thus, in both exceptional regimes, our results replace previously known recursive descriptions with directly checkable structural conditions.

A related algorithmic result concerns prescribed partial weightings.
For each fixed pair of distinct integer weights, Madarasi and Simon proved that the problem of deciding whether such a partial weighting extends to a proper weighting is polynomial-time solvable on trees~\cite[Theorem~3.3]{madarasisimon2022ab}.
This extension problem is distinct from the existence problem considered here, in which no edge weights are prescribed.

\medskip
We use the following conventions throughout.
All graphs are finite and simple, and the one-vertex tree is allowed.
The graph $K_2$ admits no proper weighting for any pair of distinct weights, whereas the one-vertex tree vacuously admits a proper weighting.
In all algorithmic statements, the weights $a$ and $b$ are fixed constants.
Fix a representation of weighted degrees as follows.
If $a$ and $b$ are linearly independent over $\Q$, represent the weighted degree of a vertex by the ordered pair consisting of the numbers of its incident edges of weights $a$ and $b$.
Otherwise, write $(a,b)=c(p,q)$ for a nonzero real number $c$ and fixed coprime integers $p,q$, and represent the weighted degree $xa+yb$ by the integer $px+qy$.
Equality of represented values is equivalent to equality of the corresponding weighted degrees, and every represented value uses $O(\log n)$ bits.
Thus all stated $O(n)$-time and $O(n)$-space bounds hold on a word RAM with word size $\Theta(\log n)$.

\section{The generic regime}\label{sec:generic}

This section proves \cref{case:generic} of \cref{thm:main}.
We first prove the statement by induction and then extract a linear-time construction from the proof.

\begin{theorem}\label{thm:generic}
  Let $a,b$ be distinct real numbers such that $ab(a+b)\neq0$.
  Every tree other than $K_2$ admits a proper $\{a,b\}$-edge-weighting.
\end{theorem}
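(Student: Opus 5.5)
The plan is to argue by induction on $|V(T)|$, removing a whole bundle of leaves at once instead of a single leaf. The reason is that re-attaching one leaf changes the weighted degree of a vertex that may have many constrained neighbours. Trees with at most two vertices are trivial (the one-vertex tree, and $K_2$ is excluded).

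\emph{Stars.} I will handle stars $K_{1,k}$ with $k\ge2$ directly. Weight all edges $a$: the centre has sum $ka$ and each leaf has sum $a$, and $ka\neq a$ because $a\neq0$ and $k\ge2$.

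\emph{The reduction.} Now let $T$ be a tree that is not a star and has at least four vertices. Take a longest path, and let $u$ be the neighbour of one of its endpoints. Then all neighbours of $u$ except one, call it $p$, are leaves; say there are $k\ge1$ of them. Let $T'$ be $T$ minus these $k$ leaves. Then $u$ is a leaf of $T'$, and $T'$ has at least three vertices because $T$ is not a star, so $T'\not\cong K_2$. By induction $T'$ has a proper weighting $\omega'$. I keep $\omega'$ on $E(T')$, so every weighted degree in $T'-u$, in particular $s(p)$, is unchanged. Write $c=\omega'(up)\in\{a,b\}$. If $j$ of the $k$ leaf edges at $u$ get weight $a$ and the rest get $b$, then
\[
s(u)=c+ja+(k-j)b .
\]
These values are pairwise distinct for $j=0,\dots,k$ since $a\neq b$. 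The only constraints are:
\begin{itemize}
\item[(1)] $s(u)\neq s(p)$;
\item[(2)] $s(u)\neq a$ if $j\ge1$;
\item[(3)] $s(u)\neq b$ if $j<k$.
\end{itemize}

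\emph{The uniform choices.} I first try $j=k$ (all $a$) and $j=0$ (all $b$). The choice $j=k$ violates (2) only if $c+(k-1)a=0$. This is impossible for $c=a$, since $ka\neq0$, so it forces $c=b=-(k-1)a$ with $k\ge2$. Symmetrically, $j=0$ violates (3) only if $c=a=-(k-1)b$. Both cannot hold at once, since then $a=-(k-1)b=(k-1)^2a$, forcing $k=2$ and $a+b=0$, which is excluded. Hence at most one uniform choice is killed by the leaf constraint. Condition (1) kills at most one value of $s(u)$ in total. So we succeed unless one uniform choice fails (2) or (3) and the other fails (1).

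\emph{The bad configuration (main obstacle).} I expect this last case to be the main obstacle; it is where $ab(a+b)\neq0$ and the mixed-sign rational ratio genuinely matter. Say $c=b=-(k-1)a$ with $k\ge2$. Then $b$ has the opposite sign to $a$, and $s(p)=c+kb=(k+1)b$. Here $k=2$ would give $a+b=0$, so in fact $k\ge3$.

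I would first try the intermediate choices $1\le j\le k-1$. These give $k-1\ge2$ further values, each of which must avoid $a$, $b$ and $s(p)$. With $b=-(k-1)a$ I can write each such value as an explicit multiple of $a$, namely $s(u)=(j-(k-j+1)(k-1))\,a$. It then suffices to check that some $j$ avoids the three forbidden multiples $1$, $-(k-1)$ and $-(k+1)(k-1)$ of $a$. This is a finite arithmetic check in $j$ and $k$, and I expect it to succeed because there are at least two candidates and the forbidden multiples are spread apart.

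If this fails, the fallback is to strengthen the induction hypothesis. The strengthened statement would say that for a tree with a designated leaf edge and any prescribed weight on it, a proper weighting exists outside a small explicit list of exceptions. I would then re-run the argument with $\omega'(up)$ flipped, which changes $c$ and breaks the degenerate relation $c=-(k-1)a$.

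\emph{Algorithm.} Finally, the longest-path choice of $u$ can be replaced by processing vertices in reverse BFS order from a root. This turns the induction into the claimed linear-time construction.
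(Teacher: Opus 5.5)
Your reduction is the paper's: strip the $k$ leaf neighbours of a vertex $u$ whose only non-leaf neighbour is $p$, weight $T'$ by induction, and choose how many new edges get each weight, subject to your constraints (1)--(3). Your handling of stars, of $T'\not\cong K_2$, and of the two uniform choices is correct. The gap is at the step you yourself call the main obstacle. You never show that some intermediate $j$ works; you only say you expect it to, and you keep a fallback of strengthening the induction hypothesis. The reason you give, ``at least two candidates and the forbidden multiples are spread apart,'' is not an argument. For $k=3$ there are only two intermediate choices and three forbidden values $a$, $b$, $s(p)$, so counting on the intermediates alone does not settle it. As written, the proof is incomplete exactly in the case you identified as hard.

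The missing observation is one you already half-state. Since $j\mapsto s(u)$ is injective, each of (1), (2), (3) excludes at most one $j$, not just (1). In your bad configuration, (2) already fails at $j=k$ and (1) at $j=0$, so no intermediate $j$ can produce $a$ or $s(p)$. Only $s(u)=b$ can occur among intermediates, and it removes at most one of the $k-1\ge2$ choices. Explicitly, $s(u)=(jk-k^2+1)a$, which equals $a$, $b=-(k-1)a$ and $(k+1)b$ exactly at $j=k$, $j=k-1$ and $j=0$; hence $j=1$ works whenever $k\ge3$.

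The paper avoids the uniform-choice detour altogether. For every $k\ge3$ there are $k+1\ge4$ choices and three constraints each killing at most one $j$. The cases $k=1$ and $k=2$ are handled directly, which your uniform-choice analysis also does correctly. So $ab(a+b)\neq0$ is needed only for stars and $k\le2$, not in your bad configuration as you suggested.
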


\begin{proof}
  We argue by induction on the order of $T$.
  An isolated vertex has no edges and therefore vacuously admits a proper $\{a,b\}$-edge-weighting.
  As observed above, $K_2$ admits no proper weighting.
  If $T$ is a star with $m\geq2$ edges, assign weight $a$ to every edge.
  Every leaf then has weighted degree $a$, while the center has weighted degree $ma$.
  Their difference is $(m-1)a\neq0$, so this weighting is proper.

  It remains to consider a tree $T$ that is neither $K_2$ nor a star.
  Root $T$ at a leaf $r$.
  Choose a nonleaf vertex $v$ at maximum distance from $r$.
  Let $u$ be the parent of $v$, and let $L$ be the set of children of $v$.
  By the choice of $v$, every child of $v$ is a leaf.
  Let $k=|L|$.
  Since $v$ is not a leaf, $k\geq1$.

  The smaller tree $T-L$ is not isomorphic to $K_2$.
  If it were, its vertices would be $u$ and $v$, and the root $r$, which remains in $T-L$, would have to be $u$.
  Every vertex of $T$ other than $v$ would then be a leaf adjacent to $v$, making $T$ a star, a contradiction.
  The induction hypothesis therefore gives a proper $\{a,b\}$-edge-weighting $\omega$ of $T-L$.

  For $j\in\{0,\dots,k\}$, extend $\omega$ by assigning weight $b$ to exactly $j$ edges joining $v$ to $L$ and weight $a$ to the other $k-j$ edges.
  The resulting weighted degree of $v$ is
  \begin{equation}\label{eq:Sj}
    S_j=\omega(uv)+(k-j)a+jb.
  \end{equation}
  Since $S_{j+1}-S_j=b-a\neq0$, the values $S_0,\dots,S_k$ are pairwise distinct.
  Among the vertices of $T-L$, only the weighted degree of $v$ changes.
  Thus the extension is proper if and only if
  \begin{equation}\label{eq:new-constraints}
    S_j\neq s_\omega(u),\qquad
    S_j\neq a\ \text{ for }j<k,\qquad
    S_j\neq b\ \text{ for }j>0
  \end{equation}
  hold.
  The second and third inequalities separate $v$ from its new leaf neighbors on edges of weight $a$ and $b$, respectively.
  Since the values $S_j$ are distinct, each forbidden equality in \cref{eq:new-constraints} excludes at most one value of $j$.
  If $k\geq3$, there are at least four candidates, so some $j$ satisfies all three constraints.
  Suppose next that $k=1$.
  For $j=0$, the unique new leaf has weighted degree $a$ and $S_0=\omega(uv)+a$; for $j=1$, it has weighted degree $b$ and $S_1=\omega(uv)+b$.
  In either case, the difference between the two endpoint sums is $\omega(uv)\neq0$.
  Since $S_0\neq S_1$, at most one equals $s_\omega(u)$, and the other choice gives a proper extension.
  Finally, suppose that $k=2$.
  For $j=0$, a conflict with a new leaf would give $\omega(uv)+2a=a$, hence $\omega(uv)=-a$.
  For $j=2$, a conflict with a new leaf would give $\omega(uv)+2b=b$, hence $\omega(uv)=-b$.
  For $j=1$, comparison with an $a$-leaf or a $b$-leaf gives, respectively, $\omega(uv)=-b$ or $\omega(uv)=-a$.
  Thus a conflict with a new leaf would imply $\omega(uv)\in\{-a,-b\}$.
  Since $\omega(uv)\in\{a,b\}$, this would force $a=0$, $b=0$, or $a+b=0$, contrary to the hypotheses.
  Hence every candidate separates $v$ from its new leaf neighbors.
  At most one of the three values equals $s_\omega(u)$, so a proper extension exists.

  In every case, the extended weighting is proper, completing the induction.
\end{proof}

At each extension step, it suffices to choose an admissible value of $j$, so the proof is constructive.
The next corollary implements the same reductions without repeatedly searching for a deepest nonleaf vertex.

\begin{corollary}\label{cor:linear-algorithm}
  Under the hypotheses of \cref{thm:generic}, a proper $\{a,b\}$-edge-weighting of an $n$-vertex tree $T\not\cong K_2$ can be found in $O(n)$ time and $O(n)$ space.
\end{corollary}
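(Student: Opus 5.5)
We will turn the inductive proof of \cref{thm:generic} into a single bottom-up pass, so that no deepest nonleaf vertex has to be searched for repeatedly. The cases $n\le 1$ and stars are handled directly: a star with at least two edges receives weight $a$ on every edge. Otherwise, we root $T$ at a leaf $r$ and compute a BFS order together with parent pointers in $O(n)$ time. The induction peels off the children of a deepest nonleaf vertex, so the reverse process adds leaf groups in BFS order.

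Concretely, we process nonleaf, nonroot vertices $v$ in BFS order, that is, from shallow to deep. The first such vertex is the unique child $c$ of $r$. The base tree on which the induction bottoms out is a star consisting of $c$ together with some of its children, or possibly $K_2$. To avoid the $K_2$ base case, we proceed as follows. The induction removes leaf groups until the remaining tree is a star centered at some vertex with at least two edges. Since $T$ is not a star, $c$ has at least one nonleaf child, and the sequence of removals ends at the star formed by $c$, $r$, and the children of $c$. Every such child is a leaf once all deeper groups have been removed. This star has at least two edges, so we weight it entirely with $a$.

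We then consider the nonleaf children $v$ of $c$, followed by the deeper nonleaf vertices, in BFS order. For each $v$, with parent $u$, the weight $\omega(uv)$ and the weighted degree $s_\omega(u)$ have already been fixed. This holds because $u$ was processed earlier and, at the moment $u$ was processed, all edges from $u$ to its children were assigned. Processing $v$ corresponds exactly to the extension step in the proof of \cref{thm:generic}, applied to the tree induced by the vertices processed so far: all children of $v$ are attached at once, temporarily as leaves. Only the weighted degree of $v$ changes, and it becomes final because the children of $v$ will later gain only edges to their own children. Here $S_j$, the weighted degree of $v$ in \cref{eq:Sj}, is the quantity tested against the constraints \cref{eq:new-constraints}.

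The main subtlety is that a child $w$ of $v$ is a leaf at the time $v$ is processed but later receives further edges, so its weighted degree changes. This is harmless, because the edge $vw$ is checked again when $w$ is processed: the constraint $S_j\ne s_\omega(u)$ at that step, with $w$ in the role of $v$ and $v$ in the role of $u$, compares against the already final value of $s_\omega(v)$. Hence every edge is ultimately verified against final sums, and the correctness argument of \cref{thm:generic} applies verbatim.

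For each $v$, only three constraints are checked, and among the first four candidate values of $j$ (or among $j\in\{0,1\}$ or $\{0,1,2\}$ when $k\le2$), a valid one is found in $O(1)$ time. Assigning the $k$ edge weights then costs $O(k)$. Summed over all $v$, the total is $O(n)$ time. The weighted degrees are stored in the representation fixed in the introduction, which gives $O(n)$ space. The main point to verify carefully is this order-of-processing argument: we must confirm that the BFS order realizes a valid reversal of the induction and that each stored sum is final when it is compared.
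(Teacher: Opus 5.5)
Your proposal is correct and is essentially the paper's argument. The paper collects the child lists of the nonleaf vertices other than $r$ and its neighbor via a DFS postorder, then restores them in reverse postorder starting from the all-$a$ star at the neighbor of $r$. You process the same leaf groups directly in BFS order. Both are top-down orders, so in each case $\omega(uv)$ and $s_\omega(u)$ are already final when the extension step of \cref{thm:generic} is applied at $v$. Testing at most four candidate values of $j$ instead of the paper's up to $k+1$ tests is an inessential difference.
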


\begin{proof}
  Handle the cases $n=1$ and stars directly, as in the proof of \cref{thm:generic}.
  Otherwise, root $T$ at a leaf $r$, and let $r'$ be its neighbor.
  A depth-first search computes the parent and child lists and orders the vertices in postorder.
  Traverse the vertices in postorder.
  Whenever a vertex $v\notin\{r,r'\}$ has any remaining children, record its current child list and delete the corresponding children.
  At that point, every proper descendant of a current child has already been deleted, so each current child is a leaf.
  Each deletion is therefore precisely the reduction used in the proof of \cref{thm:generic}.
  After all such deletions, every remaining vertex other than $r'$ is a leaf adjacent to $r'$, so the remaining tree is a star with at least two edges.
  Assign weight $a$ to all edges of this star.

  Restore the recorded leaf sets in reverse order.
  Inductively, immediately before restoring a record at $v$, the current tree is properly weighted.
  Because records are restored in reverse postorder, the record centered at the parent $u$, if it exists, has already been restored, and no subsequent restoration changes an edge incident with $u$.
  Hence $\omega(uv)$ and $s_\omega(u)$ remain fixed for the rest of the construction.
  Starting from $S_0=\omega(uv)+ka$, test $j=0,\dots,k$ using \cref{eq:new-constraints}.
  After a failed test with $j<k$, obtain $S_{j+1}$ from $S_j$ by adding $b-a$.
  The case analysis in the proof of \cref{thm:generic} shows that one of these tests succeeds.
  For the first successful value of $j$, assign weight $b$ to $j$ of the stored child edges and weight $a$ to the rest, then update the weighted degrees.
  This preserves properness and reestablishes the invariant for the next restoration.

  Each deleted child edge is recorded and restored once, while each edge of the remaining star is assigned once.
  A record with $k$ children requires at most $k+1$ tests, and the sums of the numbers of children and of the numbers of records are both $O(n)$.
  Consequently, the algorithm performs $O(n)$ word operations and uses $O(n)$ space for the rooted tree, records, weights, and weighted degrees.
\end{proof}

\section{The opposite-weight regime}\label{sec:opposite}

This section proves \cref{case:opposite} of \cref{thm:main}.
As noted in the introduction, scaling reduces the argument to the pair $\{-1,1\}$.
For this pair, the subgraph induced by the degree-$3$ vertices characterizes the trees without a proper weighting.
Recall that $\Cfam$ is the family of trees whose vertices have degree $1$ or $3$ and for which the subgraph induced by the degree-$3$ vertices has a perfect matching.
In particular, $K_2\in\Cfam$: both vertices have degree $1$, and $\degthree(K_2)$ is empty and has a perfect matching.

To expose the matching condition, we record the possible weighted degrees at the neighbor of a normalized root leaf.
Let $T$ be a tree with at least two vertices, let $r$ be a leaf, and let $v$ be its neighbor.
Define $\mathcal A(T,r)$ to be the set of values $s_\omega(v)$ attained by $\{-1,1\}$-edge-weightings $\omega$ such that $\omega(rv)=1$ and every edge other than $rv$ is proper.
Thus $\mathcal A(T,r)$ records the attainable values at the other endpoint of a root edge normalized to weight $1$.
Globally negating all edge weights preserves properness, so every proper weighting can be normalized to satisfy $\omega(rv)=1$.
The root leaf has weighted degree $1$, so, once nonemptiness is known, $T$ has no proper weighting exactly when $\mathcal A(T,r)=\{1\}$.
The next two lemmas establish nonemptiness and characterize the singleton states $\{1\}$ and $\{3\}$, which translate into perfect-matching conditions on the subgraph induced by the degree-$3$ vertices.

For the local calculation, represent branch $i$ at $v$ by a normalized attainable value $q_i$, defined as the weighted degree at its attachment vertex when the incident edge has weight $1$.
The rooted construction formalizing this normalization appears in \cref{lem:rooted-opposite}.
Multiplying all weights in branch $i$ by $\sigma_i\in\{-1,1\}$ changes this value to $\sigma_i q_i$, while the weighted degree at $v$ becomes $1+\sum_i\sigma_i$.
We must therefore choose the signs so that the latter value differs from every $\sigma_i q_i$.
The next lemma proves that such a choice always exists and characterizes the two singleton outcomes.

\begin{lemma}\label{lem:local-signing}
  For integers $q_1,\dots,q_k$ and $\sigma=(\sigma_1,\dots,\sigma_k)\in\{-1,1\}^k$, set $S(\sigma)=1+\sum_{i=1}^k\sigma_i$, and let
  \[
    \Phi(q_1,\dots,q_k)
      =\left\{
        S(\sigma):
        \sigma\in\{-1,1\}^k,
        \ S(\sigma)\neq\sigma_i q_i
        \text{ for every }i
      \right\}.
  \]
  Then $\Phi(q_1,\dots,q_k)$ is nonempty.
  Moreover,
  \begin{enumerate}[label=(\roman*),ref=(\roman*),leftmargin=*]
    \item\label{item:phi-one} $\Phi(q_1,\dots,q_k)=\{1\}$ if and only if either $k=0$, or $k=2$ and $\{q_1,q_2\}=\{1,3\}$, and
    \item\label{item:phi-three} $\Phi(q_1,\dots,q_k)=\{3\}$ if and only if $k=2$ and $q_1=q_2=1$.
  \end{enumerate}
\end{lemma}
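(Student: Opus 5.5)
We restate the condition value by value. Write a sign vector with exactly $m$ entries equal to $-1$, so that $S(\sigma)=s_m:=k+1-2m$ for $m\in\{0,\dots,k\}$. Fix $m$ and put $s=s_m$, $A_s=\{i:q_i=s\}$ and $B_s=\{i:q_i=-s\}$. The constraint $S(\sigma)\neq\sigma_iq_i$ forbids $\sigma_i=1$ when $i\in A_s$ and forbids $\sigma_i=-1$ when $i\in B_s$.

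If $s\neq0$, the sets $A_s$ and $B_s$ are disjoint. In that case $s\in\Phi$ if and only if $|A_s|\le m\le k-|B_s|$, since we can place all of $A_s$ among the $-1$ entries, all of $B_s$ among the $+1$ entries, and distribute the remaining indices freely. If $s=0$, which happens only when $k$ is odd and $m=(k+1)/2$, then $0\in\Phi$ if and only if no $q_i$ equals $0$, because an index with $q_i=0$ satisfies $\sigma_iq_i=0$ for either sign.

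So the whole lemma reduces to a combinatorial statement about which $m\in\{0,\dots,k\}$ are blocked. Call $m$ \emph{$A$-blocked} if $|A_{s_m}|\ge m+1$ and \emph{$B$-blocked} if $|B_{s_m}|\ge k-m+1$. The value $0$ is treated as blocked by a single index carrying $q_i=0$.

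For nonemptiness, suppose all $k+1$ values are blocked. Each index $i$ lies in exactly one set $A_s$ (namely $s=q_i$) and in at most one set $B_s$. Choosing one blocking reason per $m$ therefore gives two separate budget inequalities:
\[
  \sum_{m\in F}(m+1)\le k
  \qquad\text{and}\qquad
  \sum_{m\in G}(k+1-m)\le k,
\]
where $F\cup G=\{0,\dots,k\}$ records the chosen reasons. For $s=0$ we may charge the zero index to either side. These inequalities force $0\in F$ and $k\in G$.

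Moreover, the sets $A_{s_m}$ for distinct $m$ are disjoint, and so are the sets $B_{s_m}$. I would then pair $m$ with $k-m$: the quantities $m+1$ and $k+1-m$ sum to $k+2$. Summing over all $m$ should give a total of at least about $(k+1)(k+2)/4$ against a combined budget of $2k$. This is a contradiction for $k\ge3$, while $k\le2$ is checked by hand. The case $k=0$ is trivial, since there $\Phi=\{1\}$.

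For \cref{item:phi-one,item:phi-three}, the same bookkeeping applies with exactly $k$ of the $k+1$ values blocked. The budget inequalities then leave only $k\le2$; I expect that for $k\ge3$ at least two values always survive. The case $k=1$ has the two values $2$ and $0$, and blocking both needs $q_1=-2$ or $2$ for one of them and $q_1=0$ for the other, which is impossible; hence $k=1$ never yields a singleton.

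For $k=2$ the values are $3,1,-1$, and we solve directly:
\begin{itemize}[leftmargin=*]
  \item Leaving only $1$ requires blocking $3$ and $-1$. Blocking $3$ means $m=0$ is $A$-blocked, that is, some $q_i=3$. Blocking $-1$ means $m=2$ is $B$-blocked, that is, some $q_j=1$. We need $i\ne j$, and for $s=1$ we must check that $A_1$ and $B_1$ then still permit $m=1$. This yields exactly $\{q_1,q_2\}=\{1,3\}$.
  \item Leaving only $3$ requires blocking both $1$ (at $m=1$) and $-1$. This forces $|A_1|\ge2$ or $|B_1|\ge2$, and together with blocking $-1$ it yields $q_1=q_2=1$.
\end{itemize}

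The converse directions are direct substitutions. The main obstacle is making the counting in the nonemptiness step tight. The naive combined bound is only borderline at $k=2$, so for $k\ge3$ I would lean on the disjointness of the sets across distinct $m$, and on the parity restriction that only $q_i\equiv k+1\pmod 2$ matter, to obtain a strict inequality. The small cases $k\le2$ would be settled by hand.
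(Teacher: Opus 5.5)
Your $A$/$B$-blocking reformulation is exactly the paper's admissibility criterion, and your $k\le 2$ computations are correct. Both remaining steps have genuine gaps, however.

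For nonemptiness, pooling the two budget inequalities into a single bound of $2k$ discards too much. Your own estimate gives $(k+1)(k+2)/4=5\le 6$ at $k=3$ and $7.5\le 8$ at $k=4$, so the asserted contradiction for $k\ge 3$ is not there. At $k=3$ even the exact cheapest blocking pattern fails to give one: it costs $1$ at $m=0$, $2$ at $m=1$, $1$ for the zero value at $m=2$, and $1$ at $m=3$. That totals $5$, or $6$ if the zero index is counted on both sides, which is still at most $2k=6$. So $k=3$ is covered neither by your count nor by your hand check.

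Relatedly, ``charging the zero index to either side'' is the wrong bookkeeping. An index with $q_i=0$ lies in $A_0=B_0$, so it uses one unit of each budget. Within your framework, a fix is to keep the inequalities separate. Since the elements of $F$ are distinct, $\binom{|F|+1}{2}\le\sum_{m\in F}(m+1)\le k$, and similarly for $G$. When $k$ is odd and the zero index is charged to both sides, use $k-1$ in place of $k$. These bounds are incompatible with $F\cup G$ covering all $k+1$ values (all $k$ nonzero values when $k$ is odd). The paper argues differently: it takes the least $m$ with $n_{S_m}\le m$, where $N_t=\{i:q_i=t\}$ and $n_t=|N_t|$. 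It shows this $m$ is admissible except possibly when $S_m=-1$, and in that case $m-2$, with sum $3$, is admissible.

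For parts (i) and (ii) with $k\ge 3$ you give no argument (``I expect\dots''), and this is the substantive part of the lemma. Your budget inequalities cannot supply it even after the repair above. Take $k=4$ with $F=\{0,1\}$ and $G=\{3,4\}$. This costs $1+2=3$ on the $A$-side and $2+1=3$ on the $B$-side, within both budgets, and leaves the sum $1$ as the only survivor. It is ruled out only because the demands $n_5\ge 1$, $n_3\ge 2$, $n_1\ge 2$ fall on disjoint blocks of the single partition $\{N_t\}$ of the $k$ indices. Here an $A$-demand on $N_3$ and a $B$-demand on $N_1$ compete for the same indices, a coupling your two separate inequalities do not record.

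What is missing is a targeted argument such as the paper's. Since $S(\sigma)\equiv k+1\pmod 2$, a singleton $\{1\}$ or $\{3\}$ forces $k$ to be even. For $\{1\}$, admissibility of $1$ and inadmissibility of $-1$ give $n_1=k/2$. For $k\ge 4$, inadmissibility of $3$ then forces $n_3\ge k/2$, so $N_1\cup N_3$ exhausts the indices. Hence no $q_i$ equals $k+1$, and the all-positive sum $k+1\ne 1$ is admissible, a contradiction. The case $\{3\}$ is analogous, with $n_3=(k-2)/2$ and one of $N_1,N_{-1}$ of size at least $k/2+1$.

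The parity remark also disposes of $k=1$ immediately. Your stated reason there is false: $q_1=2$ gives $\Phi=\{0\}$, which is a singleton, merely not $\{1\}$ or $\{3\}$.
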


\begin{proof}
  Put $N_t=\{i:q_i=t\}$ and $n_t=|N_t|$, and suppose that exactly $m$ of the signs $\sigma_i$ are negative.
  The resulting sum is $S_m=k+1-2m$.
  Suppose first that $S_m\neq0$.
  If $q_i=S_m$, then choosing $\sigma_i=1$ would give $S_m=\sigma_i q_i$, so $\sigma_i$ must be $-1$.
  If $q_i=-S_m$, then choosing $\sigma_i=-1$ would give the same conflict, so $\sigma_i$ must be $1$.
  All other indices are unrestricted.
  Once exactly $m$ signs are negative, these restrictions are summarized by the following table.
  \[
    \begin{array}{c|c}
      \text{value of }q_i & \text{restriction on }\sigma_i\\
      \hline
      S_m & \sigma_i=-1\\
      -S_m & \sigma_i=1\\
      q_i\notin\{S_m,-S_m\} & \text{unrestricted}
    \end{array}
  \]
  Hence an admissible choice with exactly $m$ negative signs exists if and only if
  \begin{equation}\label{eq:signing-criterion}
    n_{S_m}\leq m
    \qquad\text{and}\qquad
    n_{-S_m}\leq k-m.
  \end{equation}
  Thus, for $S_m\neq0$, the target sum $S_m$ is inadmissible precisely when at least one inequality in \cref{eq:signing-criterion} fails.
  If $S_m=0$, the condition $S_m\neq\sigma_i q_i$ is independent of the sign and holds for every $i$ precisely when $n_0=0$.

  We first prove nonemptiness.
  Choose the smallest $m\in\{0,\dots,k\}$ such that $n_{S_m}\leq m$; such an $m$ exists because $m=k$ works.
  If $S_m=0$ and $n_0>0$, then minimality gives $n_{S_j}\geq j+1$ for $0\leq j<m$.
  Because $S_m=0$, we have $k=2m-1$.
  The values $S_0,\dots,S_{m-1},0$ are distinct, and hence
  \[
    k\geq 1+\sum_{j=0}^{m-1}(j+1)
      =1+\frac{m(m+1)}2>2m-1=k,
  \]
  a contradiction.
  Thus $n_0=0$, so the sum $S_m=0$ is admissible in this case.

  Suppose now that $S_m\neq0$.
  If the second inequality in \cref{eq:signing-criterion} holds, then this choice of $m$ gives an admissible signing.
  It remains to consider the case in which that inequality fails.
  Then $m\geq1$.
  By minimality, the first inequality fails for $m-1$; since $S_{m-1}=S_m+2$, this gives $n_{S_m+2}\geq m$.
  The failure of the second inequality for $m$ gives $n_{-S_m}\geq k-m+1$.
  If $S_m+2\neq-S_m$, the sets $N_{S_m+2}$ and $N_{-S_m}$ are disjoint and contain at least $m+(k-m+1)=k+1$ indices, which is impossible.
  Hence $S_m+2=-S_m$, and therefore $S_m=-1$.
  Since $S_{m-1}=1$, the failure of the first inequality at $m-1$ gives $n_1\geq m$.
  Moreover, $k$ is even and $m=k/2+1$.
  Since $m\leq k$, this implies $k\geq2$, so $m'=m-2=k/2-1$ lies in $\{0,\dots,k\}$.
  For this value, $S_{m'}=3$ and $n_3,n_{-3}\leq k-n_1\leq m'$.
  Hence \cref{eq:signing-criterion} holds for $m'$.
  Thus $\Phi(q_1,\dots,q_k)$ is nonempty.

  We next characterize the singleton set $\{1\}$.
  Suppose that $\Phi(q_1,\dots,q_k)=\{1\}$ and $k>0$.
  Parity forces $k$ to be even.
  For the sum $1$, the number of negative signs is $k/2$, so admissibility and \cref{eq:signing-criterion} give $n_1,n_{-1}\leq k/2$.
  For the sum $-1$, the number of negative signs is $k/2+1$.
  Its first inequality is automatic from $n_{-1}\leq k/2$, so inadmissibility forces $n_1\geq k/2$.
  Hence $n_1=k/2$.
  If $k\geq4$, then the sum $3$ uses $k/2-1$ negative signs, and its inadmissibility implies either $n_3\geq k/2$ or $n_{-3}\geq k/2+2$.
  The latter is impossible because the $n_1=k/2$ indices with value $1$ are disjoint from $N_{-3}$.
  In the former case, the disjoint sets $N_1$ and $N_3$ together contain all $k$ indices, so no index satisfies $q_i=k+1$.
  The all-positive signing would then satisfy \cref{eq:signing-criterion} for its sum $k+1\geq5$, a contradiction.
  Thus $k=2$.
  Inadmissibility of the all-negative sum $-1$ forces one of $q_1,q_2$ to equal $1$, and inadmissibility of the all-positive sum $3$ forces the other to equal $3$.
  Conversely, if $(q_1,q_2)$ is either $(1,3)$ or $(3,1)$, checking the four signings shows that the only admissible sum is $1$.

  It remains to characterize the singleton set $\{3\}$.
  Suppose that $\Phi(q_1,\dots,q_k)=\{3\}$.
  Again $k$ is even and $k\geq2$.
  If $k\geq4$, then admissibility of the sum $3$, which uses $(k-2)/2$ negative signs, gives $n_3\leq(k-2)/2$ and $n_{-3}\leq(k+2)/2$.
  For the sum $-3$, the first inequality is therefore automatic, so its inadmissibility forces $n_3\geq(k-2)/2$ and hence $n_3=(k-2)/2$.
  Since the sum $1$ is inadmissible, one of $n_1,n_{-1}$ is at least $k/2+1$.
  Whichever of $N_1$ and $N_{-1}$ has this size is disjoint from $N_3$, and together the two sets contain at least $k$ indices.
  They therefore occupy all indices, so no index has value $k+1$.
  The all-positive sum $k+1\neq3$ would therefore be admissible, a contradiction.
  Thus $k=2$.
  Inadmissibility of the all-negative sum $-1$ forces one of $q_1,q_2$ to equal $1$.
  Give that index the negative sign and the other index the positive sign.
  The resulting sum is $1$, so its inadmissibility forces the other value to equal $1$ as well.
  Thus $q_1=q_2=1$.
  Conversely, a direct check of the four signings shows that $q_1=q_2=1$ gives the single admissible sum $3$.
\end{proof}

We now apply \cref{lem:local-signing} to $\mathcal A(T,r)$.

\begin{lemma}\label{lem:rooted-opposite}
  Let $T$ be a tree with at least two vertices, let $r$ be a leaf, and let $v$ be its neighbor.
  Then $\mathcal A(T,r)$ is nonempty.
  Furthermore,
  \begin{align}
    \mathcal A(T,r)=\{1\}
      &\ \Longleftrightarrow\ 
      T\in\Cfam,\label{eq:rooted-one}\\
    \mathcal A(T,r)=\{3\}
      &\ \Longleftrightarrow\ 
      d_T(w)\in\{1,3\}\text{ for } w\in V(T),
      \ d_T(v)=3,\text{ and }\degthree(T)-v\text{ has a perfect matching}.
      \label{eq:rooted-three}
  \end{align}
\end{lemma}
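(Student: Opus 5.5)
Induction on $|V(T)|$ via the recurrence $\mathcal A(T,r)=\Phi(q_1,\dots,q_k)$-style decomposition.

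\textbf{Base case and decomposition.} If $T\cong K_2$, then $v$ is a leaf. Thus $\mathcal A(T,r)=\{1\}$ and $T\in\Cfam$. Also $d_T(v)=1\neq3$, so the right side of \cref{eq:rooted-three} fails, consistent with $\{1\}\neq\{3\}$.

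Otherwise let $u_1,\dots,u_k$ be the neighbors of $v$ other than $r$, where $k=d_T(v)-1\geq1$. Let $T_i$ be the component of $T-v$ containing $u_i$, together with the vertex $v$ and the edge $vu_i$. Then $T_i$ is a smaller tree with leaf $v$ whose neighbor is $u_i$. A weighting $\omega$ with $\omega(rv)=1$ and all edges other than $rv$ proper restricts to each $T_i$. After multiplying the branch by $\sigma_i=\omega(vu_i)$, the restriction becomes a normalized weighting of $T_i$ in which every edge other than $vu_i$ is proper.

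\textbf{The key identity.} I will show
\[
\mathcal A(T,r)=\bigcup_{(q_1,\dots,q_k)\in\prod_i\mathcal A(T_i,v)}\Phi(q_1,\dots,q_k).
\]
Here $s_\omega(v)=1+\sum\sigma_i=S(\sigma)$, and properness of the edge $vu_i$ is exactly $S(\sigma)\neq\sigma_iq_i$. Conversely, any choice of $q_i\in\mathcal A(T_i,v)$ and an admissible $\sigma$ glue to a valid weighting. Nonemptiness of $\mathcal A(T,r)$ then follows from the induction hypothesis together with nonemptiness of $\Phi$ in \cref{lem:local-signing}.

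\textbf{Singleton characterization.} A union of nonempty sets equals $\{c\}$ iff every term equals $\{c\}$. Using \cref{lem:local-signing}, and observing that $k=0$ is excluded since $k\geq1$:
\begin{itemize}
\item $\mathcal A(T,r)=\{1\}$ iff $k=2$ and, over all choices, $\{q_1,q_2\}=\{1,3\}$. This forces each $\mathcal A(T_i,v)$ to be a singleton, so one branch has $\mathcal A=\{1\}$ and the other has $\mathcal A=\{3\}$.
\item $\mathcal A(T,r)=\{3\}$ iff $k=2$ and both $\mathcal A(T_i,v)=\{1\}$.
\end{itemize}
Here is the one point needing care. If some $\mathcal A(T_i,v)$ has two or more elements, the choices of $q_i$ vary, so some term $\Phi$ has a $q$-tuple that is not of the required form. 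That term is therefore not the singleton $\{c\}$, and the union is not $\{c\}$ either.

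\textbf{Translating via induction.} Apply the induction hypothesis \cref{eq:rooted-one,eq:rooted-three} to the branches, with root leaf $v$ and attachment vertex $u_i$.

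For \cref{eq:rooted-three}, $\mathcal A(T,r)=\{3\}$ iff $d_T(v)=3$ and both $T_i\in\Cfam$. The degree condition holds: degrees in $T_i$ agree with $T$ except at $v$, which has degree $1$ in $T_i$. For the matching condition, $\degthree(T)-v$ is the disjoint union of $\degthree(T_1)$ and $\degthree(T_2)$, because $v$ is a leaf in $T_i$ and so not in $\degthree(T_i)$.

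For \cref{eq:rooted-one}, take $T_1$ with $\mathcal A=\{1\}$ and $T_2$ with $\mathcal A=\{3\}$. Then $d_T(v)=3$, $T_1\in\Cfam$, $d(u_2)=3$, and $\degthree(T_2)-u_2$ has a perfect matching. Hence $\degthree(T)$ has the perfect matching formed by the matching of $\degthree(T_1)$, the edge $vu_2$, and the matching of $\degthree(T_2)-u_2$.

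\textbf{Main obstacle: the converse of \cref{eq:rooted-one}.} Suppose $T\in\Cfam$ with $d(v)=3$, and let $M$ be a perfect matching of $\degthree(T)$. Then $v$ is matched to some $u_i$, say $u_2$. Removing the edge $vu_2$, the rest of $M$ splits into perfect matchings of $\degthree(T_1)$ and of $\degthree(T_2)-u_2$, since $M$ has no edges crossing between branches except through $v$. So $T_1\in\Cfam$ and $T_2$ satisfies the right side of \cref{eq:rooted-three}. The degree conditions transfer directly.

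If instead $d_T(v)\neq3$, both sides of the equivalences fail. On the left, $k\neq2$, and \cref{lem:local-signing} then excludes both singletons. On the right, for \cref{eq:rooted-one} the vertex $v$ would have to have degree $1$ or $3$, and degree $1$ means $T=K_2$, already handled.
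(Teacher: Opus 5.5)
Your proposal is correct and follows the paper's route: induction on $|V(T)|$, the branch recurrence $\mathcal A(T,r)=\bigcup\Phi(q_1,\dots,q_k)$ over $q_i\in\mathcal A(T_i,v)$, the singleton cases of \cref{lem:local-signing} (a union of nonempty sets is a singleton only if every term is, which forces the branch sets themselves to be $\{1\}$ and $\{3\}$, or $\{1\}$ and $\{1\}$), and the translation via $\degthree(T)-v=\degthree(T_1)\mathbin{\dot\cup}\degthree(T_2)$ and the matching edge $vu_2$. Using $v$ as the root leaf of each branch instead of a fresh leaf $r_i$ is only cosmetic; in the forward direction of \cref{eq:rooted-one}, also state explicitly that every degree in $T$ is $1$ or $3$, which follows from the induction hypothesis \cref{eq:rooted-three} on $T_2$.
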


\begin{proof}
  We argue by induction on $|V(T)|$.
  We first derive a recurrence for $\mathcal A(T,r)$, then characterize its singleton states $\{1\}$ and $\{3\}$, and finally translate those recurrences into matching conditions on the subgraph induced by the degree-$3$ vertices.
  Let $u_1,\dots,u_k$ be the neighbors of $v$ other than $r$.
  For each $i$, form $T_i$ from the component of $T-v$ containing $u_i$ by adding a new leaf $r_i$ adjacent to $u_i$.
  Each $T_i$ has fewer vertices than $T$, so by induction $\mathcal A(T_i,r_i)$ is nonempty.
  If $k=0$, then $T\cong K_2$; this base case will also follow directly from the recurrence below.

  For each $i$, choose $q_i\in\mathcal A(T_i,r_i)$ and a weighting of $T_i$ that realizes this value.
  Multiply every weight in $T_i$ by a sign $\sigma_i\in\{-1,1\}$.
  The artificial root edge $r_iu_i$ then has weight $\sigma_i$, and the weighted degree at $u_i$ becomes $\sigma_i q_i$.
  Delete $r_i$ and replace its incident edge by $vu_i$ with the same weight $\sigma_i$.
  This replacement preserves the contribution at $u_i$, while the normalized edge $rv$ gives $s_\omega(v)=1+\sum_{i=1}^k\sigma_i$.
  Thus the edge $vu_i$ is proper exactly when $1+\sum_j\sigma_j\neq\sigma_i q_i$.
  Therefore, for every choice of values $q_i\in\mathcal A(T_i,r_i)$ for $i=1,\dots,k$, each value in $\Phi(q_1,\dots,q_k)$ belongs to $\mathcal A(T,r)$.
  For the reverse inclusion, take any weighting counted by $\mathcal A(T,r)$.
  For each $i$, let $\sigma_i=\omega(vu_i)$, add the artificial leaf $r_i$ with edge weight $\sigma_i$, and multiply every weight in the resulting $T_i$ by $\sigma_i$.
  The artificial root edge then has weight $\sigma_i^2=1$, and the multiplication preserves all inequalities on internal edges.
  The normalized weighted degree at $u_i$ is therefore some $q_i\in\mathcal A(T_i,r_i)$, while its value before normalization was $\sigma_i q_i$.
  Hence the attainable values at $v$ are obtained exactly by independently selecting one value from each rooted branch, choosing a weighting that realizes each selected value, and applying the local signing operation $\Phi$.
  Therefore
  \begin{equation}\label{eq:rooted-recurrence}
  \mathcal A(T,r)
    =\bigcup\bigl\{
        \Phi(q_1,\dots,q_k)
        :
        q_i\in\mathcal A(T_i,r_i)
        \text{ for }i=1,\dots,k
      \bigr\}.
\end{equation}
  For $k=0$, the unique weighting has $s_\omega(v)=1$, so $\mathcal A(T,r)=\{1\}$.
  For $k>0$, nonemptiness follows from the induction hypothesis and \cref{lem:local-signing}.

  We now extract from \cref{eq:rooted-recurrence} the conditions for the two singleton states.
  If $\mathcal A(T,r)=\{1\}$, then every nonempty set $\Phi(q_1,\dots,q_k)$ occurring in \cref{eq:rooted-recurrence} equals $\{1\}$.
  By \cref{item:phi-one}, either $k=0$, or $k=2$ and every choice $q_i\in\mathcal A(T_i,r_i)$ satisfies $\{q_1,q_2\}=\{1,3\}$.
  In the latter case, choose $x\in\mathcal A(T_1,r_1)$ and $y\in\mathcal A(T_2,r_2)$.
  After reindexing, assume that $x=1$ and $y=3$.
  For any $x'\in\mathcal A(T_1,r_1)$, the equality $\{x',y\}=\{1,3\}$ and $y=3$ force $x'=1$.
  Similarly, every $y'\in\mathcal A(T_2,r_2)$ equals $3$.
  Thus, after reindexing, the two attainable sets are exactly $\{1\}$ and $\{3\}$.
  Conversely, if the two attainable sets are $\{1\}$ and $\{3\}$, then \cref{item:phi-one} makes every set in the union \cref{eq:rooted-recurrence} equal to $\{1\}$.
  By \cref{item:phi-three}, the equality $\mathcal A(T,r)=\{3\}$ requires $k=2$ and every choice of $q_1,q_2$ to satisfy $q_1=q_2=1$.
  Since both attainable sets are nonempty, this condition is equivalent to $\mathcal A(T_1,r_1)=\mathcal A(T_2,r_2)=\{1\}$.
  Conversely, these two singleton sets make every term in \cref{eq:rooted-recurrence} equal to $\{3\}$ by \cref{item:phi-three}.
  Thus
  \begin{align}
    \mathcal A(T,r)=\{1\} &\ \Longleftrightarrow\ k=0\text{ or, after reindexing, } k=2,\ \mathcal A(T_1,r_1)=\{1\},\ \mathcal A(T_2,r_2)=\{3\},\label{eq:state-one-recurrence}\\
    \mathcal A(T,r)=\{3\} &\ \Longleftrightarrow\ k=2\text{ and } \mathcal A(T_1,r_1)=\mathcal A(T_2,r_2)=\{1\}.\label{eq:state-three-recurrence}
  \end{align}

  The first recurrence has one child of each singleton type, whereas the second has two children of type $\{1\}$.
  In the first, the $\{3\}$-branch contains the matching edge incident with $v$; in the second, deleting $v$ separates the matching problem into the two subgraphs $\degthree(T_1)$ and $\degthree(T_2)$.
  We now show that the recurrences are equivalent to the matching conditions in \cref{eq:rooted-one,eq:rooted-three}.
  For $T\cong K_2$, both sides of \cref{eq:rooted-one} hold because $\degthree(T)$ is empty and therefore has a perfect matching.
  Suppose first that \cref{eq:state-one-recurrence} holds with $k=2$.
  By induction, $T_1$ belongs to $\Cfam$.
  For $T_2$, write $u_2$ for the neighbor of $r_2$.
  Induction also shows that every vertex of $T_2$ has degree $1$ or $3$, that $d_{T_2}(u_2)=3$, and that the forest $\degthree(T_2)-u_2$ has a perfect matching.
  Removing the artificial leaves and attaching both branches to $v$ preserves the degrees of every retained branch vertex and gives $d_T(v)=3$.
  In particular, $vu_2$ is an edge of $\degthree(T)$.
  The union of a perfect matching of $\degthree(T_1)$, a perfect matching of $\degthree(T_2)-u_2$, and the edge $vu_2$ is a perfect matching of $\degthree(T)$.
  Hence $T\in\Cfam$.

  Conversely, suppose that $T\in\Cfam$.
  If $T\cong K_2$, then $k=0$.
  Otherwise, $d_T(v)=3$, so $k=2$.
  Fix a perfect matching $M$ of $\degthree(T)$.
  The vertex $v$ is matched through exactly one of its two remaining branches, say through $vu_2$.
  The restriction of $M$ to $\degthree(T_1)$ is a perfect matching, while the restriction of $M-\{vu_2\}$ to $\degthree(T_2)-u_2$ is a perfect matching.
  Induction and \cref{eq:state-one-recurrence} now give $\mathcal A(T,r)=\{1\}$.
  This proves \cref{eq:rooted-one}.

  Finally, \cref{eq:state-three-recurrence} gives $\mathcal A(T,r)=\{3\}$ exactly when $k=2$ and both $T_i$ belong to $\Cfam$.
  In that case, every vertex of $T$ has degree $1$ or $3$, and $d_T(v)=3$.
  Moreover, $\degthree(T)-v=\degthree(T_1)\mathbin{\dot\cup}\degthree(T_2)$, so the union of perfect matchings of $\degthree(T_1)$ and $\degthree(T_2)$ is a perfect matching of $\degthree(T)-v$.
  Conversely, suppose that every vertex of $T$ has degree $1$ or $3$, $d_T(v)=3$, and $\degthree(T)-v$ has a perfect matching.
  Then $k=2$ and $\degthree(T)-v=\degthree(T_1)\mathbin{\dot\cup}\degthree(T_2)$.
  A perfect matching of $\degthree(T)-v$ therefore restricts to a perfect matching of each $\degthree(T_i)$.
  Thus $T_1,T_2\in\Cfam$; induction and \cref{eq:state-three-recurrence} give $\mathcal A(T,r)=\{3\}$.
  The induction is complete.
\end{proof}

\begin{theorem}\label{thm:opposite}
  A tree has no proper $\{-1,1\}$-edge-weighting if and only if it belongs to $\Cfam$.
\end{theorem}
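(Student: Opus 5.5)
The plan is to reduce \cref{thm:opposite} to \cref{lem:rooted-opposite}, specifically to \cref{eq:rooted-one}, by rooting the tree at a leaf. Two small cases come first. The one-vertex tree vacuously admits a proper weighting. It is also not in $\Cfam$, since its unique vertex has degree $0\notin\{1,3\}$, so the equivalence holds there. Every other tree $T$ has at least two vertices and hence a leaf $r$; let $v$ be its neighbor.

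The key observation, already noted before \cref{lem:local-signing}, is that $T$ has a proper $\{-1,1\}$-edge-weighting if and only if $\mathcal A(T,r)$ contains a value different from $1$. I will prove both directions.

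\emph{Forward.} Given a proper weighting $\omega$, globally negating all weights preserves properness. So we may assume $\omega(rv)=1$. Then $\omega$ is counted in $\mathcal A(T,r)$. Since the edge $rv$ is proper, $s_\omega(v)\neq s_\omega(r)=1$, so $\mathcal A(T,r)$ contains a value other than $1$.

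\emph{Backward.} Suppose some value $s\neq1$ lies in $\mathcal A(T,r)$, realized by a weighting $\omega$ with $\omega(rv)=1$ in which every edge other than $rv$ is proper. Then $s_\omega(r)=1\neq s=s_\omega(v)$, so $rv$ is proper as well, and $\omega$ is proper.

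Combining these, $T$ has no proper weighting exactly when $\mathcal A(T,r)\subseteq\{1\}$. Because \cref{lem:rooted-opposite} guarantees that $\mathcal A(T,r)$ is nonempty, this is equivalent to $\mathcal A(T,r)=\{1\}$. By \cref{eq:rooted-one}, that holds exactly when $T\in\Cfam$.

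No real obstacle remains, since all the work is in \cref{lem:rooted-opposite}. The only points needing care are the normalization by global negation and the separate treatment of the one-vertex tree, where \cref{lem:rooted-opposite} does not apply. The equivalence also does not depend on which leaf $r$ is chosen, because the conclusion $T\in\Cfam$ is independent of $r$.
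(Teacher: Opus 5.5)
Your proposal is correct and follows essentially the same route as the paper's proof: treat the one-vertex tree separately, normalize a proper weighting by global negation so that $\omega(rv)=1$, observe that $T$ has a proper weighting exactly when $\mathcal A(T,r)$ contains a value other than $1$, and then use nonemptiness together with \cref{eq:rooted-one} from \cref{lem:rooted-opposite}. Your explicit check that the one-vertex tree is not in $\Cfam$ (its vertex has degree $0$) is a welcome detail that the paper states without justification.
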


\begin{proof}
  The one-vertex tree vacuously admits a proper $\{-1,1\}$-edge-weighting and does not belong to $\Cfam$.
  Let $T$ have at least two vertices, choose a leaf $r$, and let $v$ be its neighbor.
  In every weighting counted by $\mathcal A(T,r)$, the weighted degree of $r$ is $1$, and every edge other than $rv$ is already proper.
  Hence such a weighting is proper on $T$ if and only if its value at $v$ differs from $1$.
  Any proper $\{-1,1\}$-edge-weighting of $T$ can also be normalized to satisfy $\omega(rv)=1$.
  If $\omega(rv)=-1$, multiplying every edge weight by $-1$ preserves all inequalities of weighted degrees and changes the root-edge weight to $1$.
  Therefore $T$ admits a proper $\{-1,1\}$-edge-weighting if and only if $\mathcal A(T,r)$ contains a value other than $1$.
  Since $\mathcal A(T,r)$ itself is nonempty by \cref{lem:rooted-opposite}, the tree $T$ has no proper $\{-1,1\}$-edge-weighting exactly when $\mathcal A(T,r)=\{1\}$.
  By \cref{eq:rooted-one}, this is equivalent to $T\in\Cfam$.
\end{proof}

Scaling the weights now gives \cref{case:opposite} for every distinct real pair $a,b$ satisfying $a+b=0$.

Deciding whether a proper weighting exists is simpler than constructing one.
A single degree scan constructs the forest $\degthree(T)$ and detects any vertex of degree outside $\{1,3\}$, which certifies that $T\notin\Cfam$.
Otherwise, test whether $\degthree(T)$ has a perfect matching by repeatedly matching a leaf of the current forest to its unique neighbor and deleting both vertices.
In a forest, the edge incident with a leaf belongs to every perfect matching, so deleting the leaf and its neighbor preserves the existence of a perfect matching.
Accept the empty forest and reject a nonempty forest with an isolated vertex.
Maintain the current degrees and a queue containing all vertices of current degree at most $1$.
Each vertex is deleted at most once, and each edge causes only a constant number of degree updates, so the recognition procedure takes $O(n)$ time.
When it shows that $T\notin\Cfam$, the record computation below constructs a weighting.

\begin{corollary}\label{cor:opposite-algorithm}
  Given an $n$-vertex tree $T$, one can decide whether $T$ has a proper $\{-1,1\}$-edge-weighting and, if so, construct one in $O(n)$ time using $O(n)$ space.
\end{corollary}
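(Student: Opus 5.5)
Decision is already handled by the linear-time recognition procedure described just before the statement: a degree scan followed by the leaf-stripping perfect-matching test on the forest $\degthree(T)$. By \cref{thm:opposite}, this decides existence. So the work lies in construction when $T\notin\Cfam$. I would make the proof of \cref{lem:rooted-opposite} effective. Root $T$ at a leaf $r$ with neighbor $v$, and process vertices in postorder. For each non-root vertex $u$ with parent $p$, the branch $T_u$ is the subtree at $u$ plus the artificial leaf standing in for the edge $pu$. For it I would compute, bottom-up, a small \emph{witness set} $\mathcal B(u)\subseteq\mathcal A(T_u,\cdot)$. Each value in $\mathcal B(u)$ carries a back-pointer record: the chosen child values $q_i$ and signs $\sigma_i$. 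The key point is that only a bounded amount of information per vertex is needed.

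By \cref{lem:rooted-opposite}, the full set $\mathcal A$ is either a singleton $\{1\}$ or $\{3\}$ (recognizable from the recurrences \cref{eq:state-one-recurrence,eq:state-three-recurrence}), or it contains at least two values. So I would store at most two distinct attainable values per vertex, together with a flag recording whether the set is exactly $\{1\}$, exactly $\{3\}$, or ``other''. At the end, properness at the root edge needs only some value $\neq1$. This is available precisely when the state at $v$ is not $\{1\}$, i.e.\ when $T\notin\Cfam$.

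The central step is an $O(k)$ procedure, given $k$ children each with up to two stored values, that outputs up to two distinct elements of $\bigcup\Phi(q_1,\dots,q_k)$ and the exact singleton status. For one fixed choice of the $q_i$, I would evaluate $\Phi$ in $O(k)$ time as follows. Bucket the $q_i$ by value; only values in $[-k-1,k+1]$ matter, since the sums $S_m$ lie in that range. Then test criterion \cref{eq:signing-criterion} for every $m$ by table lookup, and build a signing by assigning forced signs first and filling the rest arbitrarily. To find two distinct attainable values, it suffices to try the all-first-choice tuple and then, for each child with a second stored value, the tuple differing in that single coordinate. Done naively this costs $O(k^2)$. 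I would instead update the bucket counts incrementally, so that each single-coordinate change costs $O(1)$ to re-evaluate the criterion. The criterion for each $m$ depends only on the counts $n_{S_m}$ and $n_{-S_m}$. I would keep a counter of how many $m$ are admissible, and each bucket change affects only $O(1)$ values of $m$, giving $O(k)$ total.

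The step I expect to be the main obstacle is justifying that two stored values per child suffice. Suppose the true union has at least two elements but every tuple formed from stored values yields only the same singleton. I would argue from \cref{lem:local-signing}: a singleton $\Phi$ forces $k=2$ with specific values $\{1,3\}$ or $\{1,1\}$. Hence if some child has two distinct stored values, changing that coordinate breaks the singleton pattern, and gives a new admissible sum or a non-singleton $\Phi$. Conversely, if all children are singletons the status is exact by the recurrences. Reconstruction then proceeds top-down. Fix a target value at $v$ different from $1$, and follow its record to get each child's value and sign. Recursively realize each child's value, multiplying that subtree's weights by the accumulated sign product, which is propagated as a single bit per vertex. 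All of this takes $O(n)$ time and space overall.
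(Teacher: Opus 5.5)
Your overall architecture matches the paper's. You store bounded witness records carrying child pointers and local signs, compute them bottom-up by scanning \cref{eq:signing-criterion}, and reconstruct top-down with an accumulated sign. The gap is in the step you yourself flag as the main obstacle.

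\Cref{lem:local-signing} characterizes only the singleton outcomes $\{1\}$ and $\{3\}$. It does not say these are the only singletons, and in fact they are not. For $k=1$ one has $\Phi(2)=\{0\}$ and $\Phi(0)=\{2\}$, and for $k=3$ one has $\Phi(2,2,0)=\{4\}$. The child value $2$ really occurs: a child whose own subtree is a single pendant edge has attainable set $\{0,2\}$.

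So your sentence ``a singleton $\Phi$ forces $k=2$ with values $\{1,3\}$ or $\{1,1\}$'' is false. Nothing you cite shows that a one-coordinate change of a tuple with $\Phi=\{s\}$, $s\notin\{1,3\}$, produces a new value. For the same reason, \cref{lem:rooted-opposite} does not give your claim that $\mathcal A$ is $\{1\}$, $\{3\}$, or has at least two elements; it only decides when $\mathcal A$ equals $\{1\}$ or $\{3\}$. Hence your invariant ``store $\min(2,|\mathcal A|)$ distinct values, and the first-choice tuple plus single-coordinate changes finds them'' is not established by your argument.

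The invariant is actually true, but it needs an extra argument. A counting argument in the style of the nonemptiness proof shows that $\Phi(q_1,\dots,q_k)$ can be a singleton only when $k\le 3$. Enumerating those cases shows that each singleton $\{s\}$ arises from exactly one multiset of values, so any one-coordinate change to a different value destroys it.

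The simpler repair, which is what the paper does, is to weaken the invariant to what the construction actually uses. Keep a witness different from $1$ whenever one exists and a witness different from $3$ whenever one exists. Two distinct values supply both, but so does a single value $s\notin\{1,3\}$. Then the first scan fails to produce a value $\neq 1$ (resp.\ $\neq 3$) only in the configurations of \cref{item:phi-one} (resp.\ \cref{item:phi-three}). There $k\le 2$, and one replacement settles the question. This also makes your incremental bucket maintenance unnecessary.
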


\begin{proof}
  Handle the one-vertex tree directly.
  For a tree with at least two vertices, use the preceding degree-and-matching scan for recognition.
  If it certifies that $T\in\Cfam$, report that no proper weighting exists.
  Otherwise, \cref{thm:opposite} guarantees a proper weighting, which we construct as follows.
  Root $T$ at a leaf $r$, let $v$ be its neighbor, and process the rooted subtrees from the leaves upward as in the proof of \cref{lem:rooted-opposite}.
  For each rooted subtree, store up to three records: an arbitrary witness, a witness with value different from $1$, and a witness with value different from $3$.
  These three records exist exactly when the attainable set of the rooted subtree is nonempty, contains a value different from $1$, and contains a value different from $3$, respectively.
  A record stores its attainable value, pointers to the selected child records, and the complete local signing, represented by the set of negative indices.
  This is exactly the information used by the singleton tests in \cref{lem:local-signing}.

  Fix a selection of child records with values $q_1,\dots,q_k$.
  For each $m=0,\dots,k$, test \cref{eq:signing-criterion} when $S_m\neq0$ and test $n_0=0$ when $S_m=0$.
  In $O(k)$ time, the scan finds an admissible signing and retains the first admissible sum, the first admissible sum different from $1$, and the first admissible sum different from $3$.
  Every stored value is the weighted degree of a vertex and therefore lies in $[-n,n]$.
  The sets $N_s$ and counts $n_s$ can therefore be stored in arrays indexed by $s+n$.
  Recording the indices touched at the current vertex allows the corresponding array entries to be reset in $O(k)$ time.
  When $S_m\neq0$, select the negative indices by including $N_{S_m}$, excluding $N_{-S_m}$, and filling the remaining positions arbitrarily.
  When $S_m=0$, admissibility means that $N_0$ is empty, so any $m$ indices may be chosen as negative.

  Begin by selecting the arbitrary record for each child.
  The nonemptiness part of \cref{lem:local-signing} guarantees that the scan produces a parent record.
  If it produces no value different from $1$, then \cref{item:phi-one} shows that either $k=0$, or $k=2$ and the selected values are $1$ and $3$.
  In the latter case, first try to replace the selected value $1$ by a different stored value of the child supplying it.
  If no such value exists, try instead to replace the selected value $3$ by a different stored value of its child.
  Either replacement changes the selected pair from $\{1,3\}$, so \cref{item:phi-one} guarantees that a second scan produces a value different from $1$.
  If neither replacement exists, one child has only the value $1$ and the other has only the value $3$, so \cref{eq:state-one-recurrence} shows that the parent has only the value $1$.

  Similarly, if the initial scan produces no value different from $3$, then \cref{item:phi-three} gives $k=2$ and two selected child values equal to $1$.
  If either child has a stored value different from $1$, replace the selected value of that child by such a value and perform one further scan.
  By \cref{item:phi-three}, the new scan produces a value different from $3$.
  If neither child has such a stored value, both children have only the value $1$, so \cref{eq:state-three-recurrence} shows that the parent has only the value $3$.
  Hence a constant number of scans computes all three records at a vertex with $k$ children in $O(k)$ time.

  At the root edge $rv$, select a record whose value at $v$ is different from $1$.
  By the proof of \cref{thm:opposite}, such a record exists exactly when $T$ admits a proper $\{-1,1\}$-edge-weighting.
  Set $\omega(rv)=1$ and backtrack through this record with the initial multiplier $\tau=1$.
  Suppose the accumulated multiplier on a branch is $\tau$ and the record uses the local sign $\sigma_i$ for a child.
  Assign weight $\tau\sigma_i$ to the connecting edge and recurse into that child with multiplier $\tau\sigma_i$.
  Every edge is therefore assigned once, and the reconstructed weighting is exactly the weighting represented by the records.
  Since the sum of the numbers of children is $O(n)$, the total time and space are $O(n)$.
\end{proof}

\section{The zero-weight regime}\label{sec:zero}

This section proves \cref{case:zero} of \cref{thm:main}.
After scaling, every pair with exactly one zero weight reduces to $\{0,1\}$.
For an edge $uv$ of a tree $T$, let $T_{u,v}$ be the component of $T-uv$ containing $u$; this is the branch at $v$ determined by $uv$.
The main device in this regime is global rather than branchwise.
Choosing an even vertex set $P$ determines a unique weighting whose \emph{odd weighted-degree set} is $P$.
Whenever the endpoints of an edge have opposite membership in $P$, their weighted degrees have opposite parity, and that edge is automatically proper.
The constructions therefore choose $P$ so that parity separates the endpoints of every edge outside a small, explicitly selected set.
The odd-cut skeleton identifies the edges on which a conflict may remain, and branch profiles encode the endpoint sums under the relevant parity weightings.
A matching-and-load reformulation exposes the connection with the opposite-weight regime, and a charging argument on the tree of skeleton components proves sufficiency.

\subsection{Prescribed weighted-degree parity}\label{sec:zero-parity}

The following lemma constructs the unique weighting with a prescribed set of odd weighted degrees.

\begin{lemma}\label{lem:prescribed-parity}
  Let $T$ be a tree, and let $P\subseteq V(T)$ have even cardinality.
  There is a unique $\{0,1\}$-edge-weighting $\omega_P$ of $T$ for which $s_{\omega_P}(v)$ is odd exactly when $v\in P$.
  Moreover, if $e\in E(T)$ and $A$ is either component of $T-e$, then
  \begin{equation}\label{eq:prescribed-parity-cut}
    \omega_P(e)\equiv |A\cap P|\pmod2.
  \end{equation}
\end{lemma}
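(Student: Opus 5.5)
The plan is to define $\omega_P$ directly by the cut formula \cref{eq:prescribed-parity-cut}, check that it has the required parity at every vertex, and then prove uniqueness by showing that any weighting with odd weighted-degree set $P$ must satisfy the same formula.

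\textbf{Well-definedness.} For an edge $e$, the two components $A$ and $B$ of $T-e$ partition $V(T)$. Since $|P|$ is even, $|A\cap P|\equiv|B\cap P|\pmod 2$. Hence setting $\omega_P(e)=1$ exactly when $|A\cap P|$ is odd does not depend on which component is chosen.

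\textbf{Parity at each vertex.} Fix a vertex $v$ with neighbors $u_1,\dots,u_d$, and let $A_i=T_{u_i,v}$ be the branch containing $u_i$. These branches together with $\{v\}$ partition $V(T)$. Therefore
\[
  s_{\omega_P}(v)\equiv\sum_{i=1}^d|A_i\cap P|=|P|-[v\in P]\equiv[v\in P]\pmod2 .
\]
So $s_{\omega_P}(v)$ is odd exactly when $v\in P$.

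\textbf{Uniqueness.} Let $\omega$ be any $\{0,1\}$-edge-weighting whose odd weighted-degree set is $P$. Fix an edge $e=uv$ and let $A$ be the component of $T-e$ containing $u$. Sum the weighted degrees over $A$. Each edge inside $A$ is counted twice, and $e$ is counted once, so
\[
  \sum_{x\in A}s_\omega(x)=2\,\omega(E(A))+\omega(e).
\]
Reducing modulo $2$ gives $\omega(e)\equiv|A\cap P|$. Since $\omega(e)\in\{0,1\}$, this determines $\omega(e)$, so $\omega=\omega_P$. Applied to $\omega_P$ itself, the same computation also yields \cref{eq:prescribed-parity-cut} for either component.

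There is no real obstacle here. The only point requiring care is the evenness of $|P|$: it is what makes the cut definition independent of the chosen side, and it is where $|P|-[v\in P]\equiv[v\in P]$ is used.
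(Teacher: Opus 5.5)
Your proposal is correct and follows the paper's proof essentially step for step. Both define $\omega_P$ by the cut formula, use the evenness of $|P|$ for independence of the chosen side, and check vertex parity by summing over the branches at $v$. Uniqueness is obtained in both by summing weighted degrees over one component of $T-e$, where internal edges count twice and $e$ counts once.
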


\begin{proof}
  Define $\omega_P(e)$ by \cref{eq:prescribed-parity-cut}.
  Because $|P|$ is even, the two components of $T-e$ contain numbers of vertices of $P$ with the same parity.
  Hence the definition is independent of the chosen component.
  For a vertex $v$, the branches $T_{u,v}$ with $u\in N_T(v)$ partition $V(T)\setminus\{v\}$.
  Therefore
  \[
    s_{\omega_P}(v)
      \equiv
      \sum_{u\in N_T(v)}|V(T_{u,v})\cap P|
      \equiv |P\setminus\{v\}|\pmod2,
  \]
  which is odd exactly when $v\in P$.

  Conversely, suppose that a $\{0,1\}$-edge-weighting $\omega$ has odd weighted-degree set $P$.
  If $A$ is a component of $T-e$, then every edge internal to $A$ contributes twice to the sum of the weighted degrees over $A$, while $e$ contributes once.
  Hence
  \[
    |A\cap P|
      \equiv \sum_{v\in A}s_\omega(v)
      \equiv \omega(e)\pmod2.
  \]
  Therefore \cref{eq:prescribed-parity-cut} determines every edge weight, proving uniqueness.
\end{proof}

\subsection{Branch profiles and the skeleton}\label{sec:zero-skeleton}

Let $T$ have bipartition $X\cup Y$.
For $v\in V(T)$, let $\side_T(v)$ be the bipartition class containing $v$ and let $\overline{\side}_T(v)$ be the other class.
Define
\begin{align*}
  \alpha_T(v)
    &=\#\bigl\{u\in N_T(v):
      |V(T_{u,v})\cap\overline{\side}_T(v)|\equiv1\pmod2
    \bigr\},\\
  \beta_T(v)
    &=\#\bigl\{u\in N_T(v):
      |V(T_{u,v})\cap\side_T(v)|\equiv1\pmod2
    \bigr\}.
\end{align*}
Thus the branch profile of $v$ is $\pi_T(v)=\bigl(\alpha_T(v),\beta_T(v)\bigr)$.
Recall also that $J(T)$ is the spanning forest consisting of the edges $e\in E(T)$ for which both components of $T-e$ have odd order.

Assume throughout this subsection that $|X|$ and $|Y|$ are odd.
For an edge $uv\in E(J(T))$, consider the branch $T_{u,v}$ at $v$ determined by this edge.
The parities of $|V(T_{u,v})\cap\overline{\side}_T(v)|$ and $|V(T_{u,v})\cap\side_T(v)|$ are either $(1,0)$ or $(0,1)$.
Call $uv$ an \emph{opposite-parity edge} (an \emph{$\mathsf O$-edge}) in the first case and a \emph{same-parity edge} (an \emph{$\mathsf S$-edge}) in the second.
The classification is independent of the chosen endpoint, as verified in the proof of \cref{lem:profile-parity}.
For a nonskeleton edge $e$, one component of $T-e$ has both bipartition classes odd and the other has both bipartition classes even.
Orient every such edge toward its odd--odd component.
For a vertex $v$, let $d_T^{\mathsf O}(v)$ and $d_T^{\mathsf S}(v)$ be the numbers of incident $\mathsf O$- and $\mathsf S$-edges of $J(T)$, respectively.
Recall that $\ell_T(v)$ is the number of nonskeleton edges directed away from $v$, and that $\lambda_T(v)=d_{J(T)}(v)+2\ell_T(v)$.

The path $P_6$ in \cref{fig:zero-path-example} illustrates the two skeleton-edge types, the orientation of the nonskeleton edges, and the adjusted load.
It also shows that $J(P_6)$ may be disconnected, so load balance is imposed componentwise.
\begin{figure}[ht]
  \centering
  \begin{tikzpicture}[
      xvertex/.style={circle,draw=black,fill=black,inner sep=1.8pt},
      yvertex/.style={circle,draw=black,fill=white,inner sep=1.8pt}
    ]
    \node[xvertex] (v1) at (0,0) {};
    \node[yvertex] (v2) at (1.1,0) {};
    \node[xvertex] (v3) at (2.2,0) {};
    \node[yvertex] (v4) at (3.3,0) {};
    \node[xvertex] (v5) at (4.4,0) {};
    \node[yvertex] (v6) at (5.5,0) {};

    \draw[line width=1.4pt] (v1)--(v2);
    \draw[-{Stealth[length=2.25mm]},line width=.55pt] (v3)--(v2);
    \draw[line width=1.4pt] (v3)--(v4);
    \draw[-{Stealth[length=2.25mm]},line width=.55pt] (v4)--(v5);
    \draw[line width=1.4pt] (v5)--(v6);

    \node[above=4pt] at (v1) {$v_1$};
    \node[above=4pt] at (v2) {$v_2$};
    \node[above=4pt] at (v3) {$v_3$};
    \node[above=4pt] at (v4) {$v_4$};
    \node[above=4pt] at (v5) {$v_5$};
    \node[above=4pt] at (v6) {$v_6$};
    \node at (.55,.72) {$\mathsf O$};
    \node at (2.75,.72) {$\mathsf S$};
    \node at (4.95,.72) {$\mathsf O$};
    \node at (.55,-.65) {$\lambda_{P_6}=1$};
    \node at (2.75,-.65) {$\lambda_{P_6}=3$};
    \node at (4.95,-.65) {$\lambda_{P_6}=1$};
  \end{tikzpicture}
  \caption{The odd-cut skeleton and adjusted loads of $P_6$.
    Filled and unfilled vertices represent the two bipartition classes.
    Thick edges form $J(P_6)$, and the arrows on the nonskeleton edges point toward the odd--odd component.
    The adjusted load shown below each component of $J(P_6)$ is common to its two vertices.}
  \label{fig:zero-path-example}
\end{figure}

Each component of $J(P_6)$ is a copy of $K_2$, and the displayed adjusted load is constant on each component.
Since both bipartition classes have order $3$, \cref{case:zero} of \cref{thm:main} shows that $P_6$ has no proper $\{0,1\}$-edge-weighting.

\begin{lemma}\label{lem:profile-parity}
  If $|X|$ and $|Y|$ are odd, then
  \begin{equation}\label{eq:profile-osb}
    \pi_T(v)=\bigl(d_T^{\mathsf O}(v)+\ell_T(v),d_T^{\mathsf S}(v)+\ell_T(v)\bigr)
  \end{equation}
  for every vertex $v$.
  Moreover, $\alpha_T(v)\equiv1\pmod2$ and $\beta_T(v)\equiv0\pmod2$, and consequently $d_{J(T)}(v)$ is odd.
\end{lemma}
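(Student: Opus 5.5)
The plan is to classify each edge $uv$ incident with $v$ by the parities of the two counts $|V(T_{u,v})\cap\overline{\side}_T(v)|$ and $|V(T_{u,v})\cap\side_T(v)|$. We then read off its contribution to $\alpha_T(v)$ and $\beta_T(v)$ from that class. The parity statements should follow by summing over the branches at $v$.

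First we check that the $\mathsf O$/$\mathsf S$ label of a skeleton edge does not depend on the endpoint chosen. Let $uv$ be a skeleton edge, and let $A=T_{u,v}$ and $B=T_{v,u}$. The endpoints $u$ and $v$ lie in different classes, so $\overline{\side}_T(v)=\side_T(u)$. Suppose $A$ has parities $(1,0)$ relative to $v$, meaning it has odd intersection with $\overline{\side}_T(v)$ and even intersection with $\side_T(v)$. Since $|X|$ and $|Y|$ are odd, $B$ then has even intersection with $\overline{\side}_T(v)=\side_T(u)$ and odd intersection with $\side_T(v)=\overline{\side}_T(u)$. Relative to $u$, the branch $B$ therefore also has parities $(1,0)$, so the edge is $\mathsf O$ from both ends. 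The case $(0,1)$ is symmetric. Thus an $\mathsf O$-edge counts toward $\alpha_T(v)$ only, and an $\mathsf S$-edge counts toward $\beta_T(v)$ only.

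Next consider a nonskeleton edge $uv$. Its branch at $v$ is either the odd--odd component or the even--even component. The branch is odd--odd exactly when the edge is directed away from $v$, that is, toward $u$. Such a branch contributes $1$ to both $\alpha_T(v)$ and $\beta_T(v)$, while an even--even branch contributes to neither. Summing the three kinds of incident edges yields \cref{eq:profile-osb}.

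For the parities, note that the branches $T_{u,v}$ partition $V(T)\setminus\{v\}$. Hence $\sum_u|V(T_{u,v})\cap\overline{\side}_T(v)|=|\overline{\side}_T(v)|$, which is odd. Also $\sum_u|V(T_{u,v})\cap\side_T(v)|=|\side_T(v)|-1$, which is even. Reducing modulo $2$, the number of branches with odd intersection with $\overline{\side}_T(v)$ is odd, so $\alpha_T(v)$ is odd. In the same way, $\beta_T(v)$ is even.

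Finally, $\alpha_T(v)+\beta_T(v)=d_{J(T)}(v)+2\ell_T(v)$, which is odd, so $d_{J(T)}(v)$ is odd. The only delicate point is the endpoint-independence check in the first step, and it uses the assumption that both bipartition classes have odd order.
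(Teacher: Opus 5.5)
Your proof is correct and follows the paper's argument essentially step for step. You show that the $\mathsf O$/$\mathsf S$ type does not depend on the endpoint, using the oddness of $|X|$ and $|Y|$; you tabulate each incident edge's contribution to $(\alpha_T(v),\beta_T(v))$, including nonskeleton edges; and you get the parities by counting over the branches that partition $V(T)\setminus\{v\}$. The paper additionally proves that a nonskeleton edge separates an odd--odd component from an even--even one, using that $|V(T)|$ is even, whereas you take this from the setup that defines the orientation.
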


\begin{proof}
  Let $xy\in E(J(T))$, with $x\in X$ and $y\in Y$, and let $A$ and $B$ be the components of $T-xy$ containing $x$ and $y$, respectively.
  Both $A$ and $B$ have odd order, so each has one bipartition class of odd order and one of even order.
  To compare the types at the two endpoints, first suppose that the branch $B$ at $x$ contributes the parity pair $(1,0)$.
  Then $|B\cap Y|$ is odd and $|B\cap X|$ is even.
  Since $|X|$ and $|Y|$ are odd, $|A\cap X|$ is odd and $|A\cap Y|$ is even.
  Relative to the endpoint $y$, the opposite class is $X$ and the same class is $Y$, so the branch $A$ at $y$ also has pair $(1,0)$.
  If the pair at $x$ is $(0,1)$, then $|B\cap Y|$ is even and $|B\cap X|$ is odd; the oddness of $|X|$ and $|Y|$ gives the same pair $(0,1)$ for the branch $A$ at $y$.
  Hence the type $\mathsf O$ or $\mathsf S$ is independent of the endpoint.

  If $e\notin E(J(T))$, the two components of $T-e$ cannot both have odd order.
  Since $|V(T)|$ is even, both components therefore have even order.
  Because both bipartition classes of $T$ are odd, one component has both class sizes odd and the other has both class sizes even.
  The four possible branch contributions are listed below.
  \[
    \begin{array}{l|c}
      \text{incident edge type} & \text{contribution to }(\alpha_T(v),\beta_T(v))\\
      \hline
      \mathsf O\text{-skeleton edge} & (1,0)\\
      \mathsf S\text{-skeleton edge} & (0,1)\\
      \text{nonskeleton edge directed away from $v$} & (1,1)\\
      \text{nonskeleton edge directed toward $v$} & (0,0)
    \end{array}
  \]
  Summing these contributions proves \cref{eq:profile-osb}.

  The branches at $v$ partition $V(T)\setminus\{v\}$.
  Summing the opposite-class sizes of these branches shows that the parity of $\alpha_T(v)$ is $|\overline{\side}_T(v)|$.
  Similarly, summing their same-class sizes shows that the parity of $\beta_T(v)$ is $|\side_T(v)|-1$.
  This proves the two parity identities.
  Finally, \cref{eq:profile-osb} gives $d_{J(T)}(v)=d_T^{\mathsf O}(v)+d_T^{\mathsf S}(v) \equiv \alpha_T(v)+\beta_T(v) \equiv1\pmod2$.
\end{proof}

\subsection{A matching-and-load description}\label{sec:zero-matching}

The next lemma shows that the degree and profile conditions can be expressed using the family $\Cfam$ and the adjusted load.

\begin{lemma}\label{lem:odd-cut-matching-form}
  Let the two bipartition classes of $T$ have odd order, and let $H$ be a component of $J(T)$.
  The following statements are equivalent.
  \begin{enumerate}[label=(\roman*),ref=(\roman*),leftmargin=*]
    \item\label{item:profile-compressed} Every vertex of $H$ has degree $1$ or $3$ in $H$, and $\pi_T$ is constant on $V(H)$.
    \item\label{item:load-compressed} The tree $H$ belongs to $\Cfam$, and $\lambda_T$ is constant on $V(H)$.
  \end{enumerate}
  When these statements hold, the common value of $\lambda_T$ is $2k_H+1$ for an integer $k_H\geq0$, and, for every $v\in V(H)$,
  \begin{equation}\label{eq:odd-cut-load}
    \ell_T(v)=
    \begin{cases}
      k_H   &\text{if } d_H(v)=1,\\
      k_H-1 &\text{if } d_H(v)=3.
    \end{cases}
  \end{equation}
  In particular, $k_H\geq1$ whenever $H$ contains a degree-$3$ vertex.
  Moreover, for every $v\in V(H)$,
  \begin{equation}\label{eq:profile-from-load}
    \pi_T(v)=
    \begin{cases}
      (k_H+1,k_H) &\text{if } k_H\text{ is even},\\
      (k_H,k_H+1) &\text{if } k_H\text{ is odd}.
    \end{cases}
  \end{equation}
\end{lemma}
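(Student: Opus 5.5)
The plan is to work entirely from \cref{lem:profile-parity}. For $v\in V(H)$ it gives $\pi_T(v)=(d_T^{\mathsf O}(v)+\ell_T(v),\,d_T^{\mathsf S}(v)+\ell_T(v))$, with $\alpha_T(v)$ odd, $\beta_T(v)$ even and $d_H(v)=d_{J(T)}(v)$ odd. Two consequences are used repeatedly.
\begin{itemize}
\item Since $\alpha_T(v)+\beta_T(v)=d_H(v)+2\ell_T(v)=\lambda_T(v)$, constancy of $\pi_T$ on $V(H)$ forces constancy of $\lambda_T$.
\item Since $d_H$ is odd, $H$ is never a single vertex, so $H$ has a leaf.
\end{itemize}

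Under either statement, every vertex of $H$ has degree $1$ or $3$ and $\lambda_T\equiv 2k_H+1$ on $V(H)$. The common value is odd because $d_H$ is odd. Then \cref{eq:odd-cut-load} is immediate from $\ell_T(v)=(\lambda_T(v)-d_H(v))/2$. Nonnegativity of $\ell_T$ at a degree-$3$ vertex gives $k_H\geq1$.

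Next I determine the edge types from the parity constraints. At a leaf $v$ of $H$ we have $\ell_T(v)=k_H$. The condition that $d_T^{\mathsf O}(v)+k_H$ is odd forces the unique incident skeleton edge to be an $\mathsf O$-edge when $k_H$ is even and an $\mathsf S$-edge when $k_H$ is odd. At a degree-$3$ vertex we have $\ell_T(v)=k_H-1$, so $d_T^{\mathsf O}(v)\equiv k_H\pmod 2$. Consider the case $k_H$ even; the odd case is symmetric with $\mathsf O$ and $\mathsf S$ exchanged. Then every leaf edge of $H$ is an $\mathsf O$-edge, so all $\mathsf S$-edges join degree-$3$ vertices. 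Every degree-$3$ vertex has $d_T^{\mathsf S}(v)\in\{1,3\}$. Thus the $\mathsf S$-edges form a spanning subgraph $F$ of the forest $\degthree(H)$ in which every vertex has odd degree.

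For (i)$\Rightarrow$(ii), it remains to show that $\degthree(H)$ has a perfect matching. Constancy of $\pi_T$ with the leaf computation gives $\pi_T\equiv(k_H+1,k_H)$ in the even case. At a degree-$3$ vertex this gives $(d_T^{\mathsf O},d_T^{\mathsf S})=(2,1)$. Hence $F$ is $1$-regular, that is, a perfect matching of $\degthree(H)$, and $H\in\Cfam$. The odd case uses the $\mathsf O$-edges instead.

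For (ii)$\Rightarrow$(i), I need constancy of $\pi_T$, and this is the main obstacle: I must exclude a degree-$3$ vertex all of whose skeleton edges are $\mathsf S$-edges. The key observation is that in a forest, a spanning subgraph with prescribed odd-degree set is unique. A perfect matching $M$ of $\degthree(H)$ and the subgraph $F$ both have every vertex of odd degree. Therefore $F=M$, in the same way as the uniqueness argument of \cref{lem:prescribed-parity}, applied componentwise: each edge is determined by the parity of the vertex count on one side. Hence each degree-$3$ vertex has exactly one $\mathsf S$-edge and two $\mathsf O$-edges. Substituting into \cref{eq:profile-osb} gives $\pi_T(v)=(k_H+1,k_H)$ at every vertex of $H$, both at leaves and at degree-$3$ vertices, which is constant.

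This simultaneously proves \cref{eq:profile-from-load} for both directions; the odd-$k_H$ case yields $(k_H,k_H+1)$.
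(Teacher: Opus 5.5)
Your proof is correct. For (i)$\Rightarrow$(ii) it is close to the paper's, but for the substantive direction (ii)$\Rightarrow$(i) it takes a genuinely different route.

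The paper fixes a perfect matching $M$ of $\degthree(H)$ and classifies each edge $e=xy$ of $H$ by a cut computation. The side $A$ of $H-e$ containing $x$ has $|A\cap X|$ odd and $|A\cap Y|$ even. Each nonskeleton branch directed away from $A$ toggles both parities. Since $\sum_{v\in A}\ell_T(v)\equiv k_H+\chi_{\{e\in M\}}\pmod 2$, the edge $e$ is an $\mathsf O$-edge exactly when $k_H+\chi_{\{e\in M\}}$ is even.

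You never examine cuts. The vertex parity $\alpha_T(v)\equiv1$ from \cref{lem:profile-parity} fixes the type of every leaf edge of $H$ and the parity of $d_T^{\mathsf O}(v)$ at degree-$3$ vertices. The minority-type edge set $F$ and $M$ are then two edge sets of the forest $\degthree(H)$ with the same odd-degree set. They are therefore equal, by the same uniqueness principle as in \cref{lem:prescribed-parity}.

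What your route buys:
\begin{itemize}
\item It is shorter.
\item It treats both directions through the single set $F$: in (i)$\Rightarrow$(ii), profile constancy forces every vertex to have $F$-degree $1$; in (ii)$\Rightarrow$(i), uniqueness forces $F=M$.
\item It yields \cref{eq:profile-from-load} as a by-product.
\end{itemize}
The cost is that it leans on the parity half of \cref{lem:profile-parity}. The paper's cut computation is self-contained and does not need that half. For (i)$\Rightarrow$(ii), the paper argues slightly more directly, from constancy of $d_T^{\mathsf O}-d_T^{\mathsf S}$ and its value $\pm1$ at a leaf, without first introducing $k_H$.

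The only point you leave implicit is $k_H\geq0$, which is immediate from $\lambda_T(v)\geq d_H(v)\geq1$.
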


\begin{proof}
  Suppose first that \cref{item:profile-compressed} holds, and write $\alpha_H$ and $\beta_H$ for the common values of $\alpha_T$ and $\beta_T$, respectively.
  By \cref{eq:profile-osb},
  \begin{equation}\label{eq:os-difference}
    d_T^{\mathsf O}(v)-d_T^{\mathsf S}(v)=\alpha_H-\beta_H
  \end{equation}
  for every $v\in V(H)$.
  At a leaf of $H$, the left-hand side is $1$ or $-1$.
  Hence $\alpha_H-\beta_H\in\{1,-1\}$, and every leaf-edge has the same type.
  At a vertex of degree $3$, \cref{eq:os-difference} shows that exactly two incident edges have the leaf-edge type and exactly one has the other type.
  The edges of the other type are therefore incident with every degree-$3$ vertex exactly once and with no leaf.
  They form a perfect matching of $\degthree(H)$, so $H\in\Cfam$.
  Adding the two coordinates in \cref{eq:profile-osb} gives $\lambda_T(v)=d_H(v)+2\ell_T(v)=\alpha_H+\beta_H$ for every $v\in V(H)$.
  Thus \cref{item:load-compressed} holds.

  Conversely, suppose that \cref{item:load-compressed} holds, and let $M$ be a perfect matching of $\degthree(H)$.
  Extend $M$ to a spanning subgraph of $H$ by including every remaining vertex as an isolated vertex.
  Since every degree in $H$ is odd, the common value of $\lambda_T$ is a positive odd integer.
  Write it as $2k_H+1$ with $k_H\geq0$.
  Since $M$ is incident with each degree-$3$ vertex exactly once and with no leaf, we have $d_H(v)=1+2d_M(v)$ for every $v\in V(H)$.
  The equality $\lambda_T(v)=2k_H+1$ is therefore equivalent to $\ell_T(v)=k_H-d_M(v)$, which is \cref{eq:odd-cut-load}.

  We determine the types of the edges of $H$ from this load equation.
  Let $e=xy\in E(H)$ with $x\in X$ and $y\in Y$, and let $A$ be the component of $H-e$ containing $x$.
  Since every degree in $H$ is odd and $e$ is the only edge of $H$ leaving $A$,
  \[
    |A|\equiv\sum_{v\in A}d_H(v)\equiv1\pmod2.
  \]
  The cut edge is incident with $x\in X$, so
  \[
    \sum_{v\in A\cap Y}d_H(v)=|E(H[A])|=|A|-1.
  \]
  It follows that $|A\cap X|$ is odd and $|A\cap Y|$ is even.
  The component of $T-e$ containing $x$ is obtained from $A$ by adjoining the nonskeleton branches attached to its vertices.
  Such a branch contains an odd number of vertices from each bipartition class precisely when its edge is directed away from its endpoint in $A$; otherwise, both numbers are even.
  Thus a nonskeleton branch whose edge is directed away from its endpoint in $A$ toggles both bipartition parities, whereas one whose edge is directed toward its endpoint in $A$ toggles neither.
  It follows that $e$ is an $\mathsf O$-edge exactly when $\sum_{v\in A}\ell_T(v)$ is even.
  Using \cref{eq:odd-cut-load} and the oddness of $|A|$, we obtain
  \[
    \sum_{v\in A}\ell_T(v)
      \equiv k_H+\sum_{v\in A}d_M(v)
      \equiv k_H+\chi_{\{e\in M\}}\pmod2,
  \]
  because each matching edge internal to $A$ contributes twice to the degree sum, while $e$ is the only edge of $H$ leaving $A$.
  Consequently, $e$ is an $\mathsf O$-edge if and only if $k_H+\chi_{\{e\in M\}}\equiv0\pmod2$.

  If $k_H$ is even, the nonmatching edges are $\mathsf O$-edges and the matching edges are $\mathsf S$-edges.
  Every leaf is therefore incident with one $\mathsf O$-edge, while every degree-$3$ vertex is incident with two $\mathsf O$-edges and one $\mathsf S$-edge.
  Together with \cref{eq:odd-cut-load,eq:profile-osb}, this gives $\pi_T(v)=(k_H+1,k_H)$ at every vertex of $H$.
  If $k_H$ is odd, the two edge types are reversed, and the same calculation gives $\pi_T(v)=(k_H,k_H+1)$ throughout $H$.
  This proves \cref{item:profile-compressed} and \cref{eq:profile-from-load}.
\end{proof}

\subsection{The characterization theorem}\label{sec:zero-characterization}

We state the characterization in matching-and-load form because it makes the connection with the opposite-weight regime explicit.
In the sufficiency proof, the charging takes place on the auxiliary tree $Q$ obtained by contracting the skeleton components.
The equivalent degree-and-profile form in \cref{lem:odd-cut-matching-form} is used for necessity and for constructing a proper weighting when the stated conditions fail.

\begin{theorem}\label{thm:odd-cut-skeleton}
  Let $T$ be a tree with bipartition $X\cup Y$.
  Then $T$ has no proper $\{0,1\}$-edge-weighting if and only if
  \begin{enumerate}[label=(\roman*),ref=(\roman*),leftmargin=*]
    \item\label{item:odd-cut-parts} $|X|$ and $|Y|$ are odd,
    \item\label{item:odd-cut-blocks} every component $H$ of $J(T)$ belongs to $\Cfam$, and
    \item\label{item:odd-cut-load} $\lambda_T$ is constant on every component of $J(T)$.
  \end{enumerate}
\end{theorem}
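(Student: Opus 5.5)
The plan is to work throughout with the parity weightings $\omega_P$ of \cref{lem:prescribed-parity}. Every $\{0,1\}$-edge-weighting is $\omega_P$ for its odd weighted-degree set $P$, and an edge whose endpoints have opposite membership in $P$ is automatically proper. So the question becomes whether some even set $P$ makes the remaining, same-membership edges proper as well.

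First I dispose of condition \cref{item:odd-cut-parts}. If $|X|$ is even, take $P=X$; every edge joins $X$ to $Y$, so $\omega_X$ is proper. The case where $|Y|$ is even is symmetric. Hence \cref{item:odd-cut-parts} is necessary for nonexistence, and from now on both classes are odd and $|V(T)|$ is even.

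For a general even $P$, I will compute $s_{\omega_P}(v)$ as the number of branches $T_{u,v}$ with $|V(T_{u,v})\cap P|$ odd. The reference choice is $P=X\triangle D$, where $D$ is a correction set, and I will compare $s_{\omega_P}$ with the branch profile $\pi_T(v)$ from \cref{lem:profile-parity}. Two facts drive the comparison. Nonskeleton branches have even order, and their parity contribution is governed by the orientation, which is exactly where $\ell_T$ enters. Skeleton edges are precisely where an odd branch sits, so they are the only edges at which the two endpoint sums can collide once parities agree. For $P=X$ itself, the weighted degree of $v\in X$ is $\beta_T(v)$ and that of $v\in Y$ is $\alpha_T(v)$. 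Modifying $P$ by flipping pairs inside a skeleton component $H$ then acts on the edges of $H$ like a $\pm1$-signing. This is the bridge to \cref{thm:opposite}.

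\emph{Construction when \cref{item:odd-cut-blocks} or \cref{item:odd-cut-load} fails.} By \cref{lem:odd-cut-matching-form}, some component $H$ has a vertex of degree other than $1$ or $3$ in $H$, or has $\pi_T$ nonconstant on $V(H)$. In the first case I will reinterpret the choice of $P$ on $V(H)$ as a $\{-1,1\}$-weighting of $H$, shifted by the profile. Because $H\notin\Cfam$, \cref{thm:opposite} supplies a proper signing of $H$. On every other component I keep a default choice that is parity-separating across all nonskeleton edges. In the second case I will choose $P$ so that the nonconstant profile itself separates an edge $uv$ of $H$ with $\pi_T(u)\neq\pi_T(v)$, and then propagate along $H$ as in the leaf-extension argument. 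In both cases the edges outside the skeleton stay proper by parity, so only edges of $J(T)$ need checking.

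\emph{Necessity.} This is the step I expect to be the main obstacle. Assume all three conditions and suppose, for contradiction, that $\omega_P$ is proper. Contract each skeleton component to a node to obtain the tree $Q$, whose edges are the nonskeleton edges, oriented toward odd--odd components. Using \cref{eq:odd-cut-load,eq:profile-from-load}, each component $H$ carries a fixed value $k_H$, and the profile is forced to be $(k_H+1,k_H)$ or $(k_H,k_H+1)$. I will show that properness of $\omega_P$ on $H$ forces $|V(H)\cap P|$ and the parity contributions on the incident $Q$-edges to behave like a state $\{1\}$ in \cref{lem:rooted-opposite}, which is impossible for a tree in $\Cfam$ unless $H$ \emph{charges} one unit of discrepancy to a neighbouring component across an oriented $Q$-edge. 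I then sum these charges over $Q$. The balance of $\lambda_T$ makes the charges sent and received incompatible: $Q$ is a tree, and the orientations point toward odd--odd sides, so some leaf component of $Q$ must absorb a charge it cannot accept. The delicate part is defining the charge so that \cref{eq:odd-cut-load} makes it exactly conserved.
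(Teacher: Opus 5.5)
Your proposal has a genuine gap in each direction. In the direction where (i)--(iii) rule out a proper weighting, your local claim is the right one: properness forces each skeleton component to send a charge across some incident edge of $Q$. But the charge is never defined. The mechanism you propose for the contradiction (conservation via the balance of $\lambda_T$, orientations toward odd--odd sides, a leaf of $Q$ absorbing a charge) is not what makes it work. What is needed is the following.

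Call a nonskeleton edge oriented from $H$ to $H'$ a deviation at $H$ if its weight is $0$, and a deviation at $H'$ if its weight is $1$. Then each edge of $Q$ is a deviation at exactly one of its endpoints, whatever the orientation. If no incident edge deviates at $H$, the nonskeleton edges contribute exactly $\ell_T(v)$ at every $v\in V(H)$. Hence $s_\omega(v)=\tfrac12\lambda_T(v)+\tfrac12 s_\eta(v)$ with $\eta=2\omega-1$ on $E(H)$. This identity is the only place where constancy of $\lambda_T$ enters. Since $H\in\Cfam$, \cref{thm:opposite} then gives an improper edge of $H$. A proper $\omega$ would therefore need a deviation at each of the $|V(Q)|$ components, on pairwise distinct edges of $Q$, of which there are only $|V(Q)|-1$. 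Neither the rooted state $\{1\}$ of \cref{lem:rooted-opposite} nor $|V(H)\cap P|$ plays any role.

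In the construction direction, your profile-mismatch case lacks the even set that does the work. The propagation you propose is both unnecessary and unavailable. Take $xy\in E(J(T))$ with $x\in X$ and $\pi_T(x)\neq\pi_T(y)$, and let $A$ be the component of $T-xy$ containing $x$. Set $P=X\mathbin{\triangle}V(A)$ if $\alpha_T(x)\neq\alpha_T(y)$, and $P=Y\mathbin{\triangle}V(A)$ otherwise. On each side of $xy$ the set $P$ agrees with a bipartition class, so every edge other than $xy$ is parity-separated. The endpoint sums on $xy$ are $\alpha_T(x)+\epsilon$ and $\alpha_T(y)+\epsilon$ (respectively $\beta_T(x)+\epsilon$ and $\beta_T(y)+\epsilon$) for a common $\epsilon\in\{-1,1\}$. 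The leaf-extension argument of \cref{thm:generic} could not replace this, because its steps with $k\le2$ rely on $ab(a+b)\neq0$.

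Your degree case, by contrast, is a valid and more uniform alternative to the paper's three-majority-edge construction, provided you apply it only after excluding mismatch edges. If $\pi_T$ is constant on $V(H)$, then so is $\lambda_T=\alpha_T+\beta_T$. For any such $H\notin\Cfam$, a proper $\eta$ from \cref{thm:opposite} lifts to $T$: apply \cref{lem:prescribed-parity} inside $H$, and choose $X$ or $Y$ on each component of $T-E(H)$. These components have odd order, so either parity can be realized, and the nonskeleton edges then contribute exactly $\ell_T(v)$. This route even makes \cref{lem:odd-cut-matching-form} unnecessary for this direction. Without constancy of $\pi_T$, however, the shift $\tfrac12\lambda_T$ varies along $H$, and a proper $\eta$ need not lift.
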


\begin{proof}
  We first prove necessity, starting with the parity of the two bipartition classes.
  Suppose that $T$ has no proper $\{0,1\}$-edge-weighting.
  If $|X|$ were even, the weighting $\omega_X$ from \cref{lem:prescribed-parity} would give odd weighted degrees on $X$ and even weighted degrees on $Y$.
  Every edge would then be proper.
  If $|Y|$ were even, the analogous weighting $\omega_Y$ would also be proper on every edge.
  Hence both bipartition classes have odd order, proving \cref{item:odd-cut-parts}.

  We next show that $\pi_T$ is constant on each component of $J(T)$.
  Fix an edge $xy\in E(J(T))$, with $x\in X$ and $y\in Y$, and let $A$ and $B$ be the components of $T-xy$ containing $x$ and $y$, respectively.
  Both $A$ and $B$ have odd order.
  Put $P_X=X\mathbin{\triangle}V(A)$.
  The set $P_X$ has even cardinality, so \cref{lem:prescribed-parity} gives the weighting $\omega_{P_X}$.
  Under this weighting, $xy$ is the only edge that can be improper.
  For every edge other than $xy$, either both endpoints lie in $A$ or both lie outside $A$.
  Its endpoints therefore have opposite membership in $P_X$ and hence have weighted degrees of opposite parity under $\omega_{P_X}$.
  Thus every edge other than $xy$ is proper.
  Since $T$ has no proper $\{0,1\}$-edge-weighting, the edge $xy$ must be improper.

  For an edge incident with $x$ other than $xy$, its branch lies in $A$, where $P_X$ agrees with $Y=\overline{\side}_T(x)$.
  For an edge incident with $y$ other than $xy$, its branch lies in $B$, where $P_X$ agrees with $X=\overline{\side}_T(y)$.
  Hence each such edge has weight $1$ exactly when its branch contributes to the first profile coordinate.
  Using the component $A$ in \cref{eq:prescribed-parity-cut}, we have $\omega_{P_X}(xy)\equiv|A\cap Y|\pmod2$.
  This value is $0$ when $xy$ has type $\mathsf O$ and $1$ when it has type $\mathsf S$.
  If $xy$ has type $\mathsf O$, then it contributes to the first profile coordinate at each endpoint but has weight $0$ under $\omega_{P_X}$.
  Hence $s_{\omega_{P_X}}(x)=\alpha_T(x)-1$ and $s_{\omega_{P_X}}(y)=\alpha_T(y)-1$.
  If $xy$ has type $\mathsf S$, then it does not contribute to the first profile coordinate but has weight $1$.
  Hence $s_{\omega_{P_X}}(x)=\alpha_T(x)+1$ and $s_{\omega_{P_X}}(y)=\alpha_T(y)+1$.
  In either case, impropriety of $xy$ gives $\alpha_T(x)=\alpha_T(y)$.

  To compare the second profile coordinates, put $P_Y=Y\mathbin{\triangle}V(A)$.
  This set is even, and the same parity argument again makes $xy$ the only edge that can be improper.
  On every other incident branch at $x$ or $y$, the set $P_Y$ agrees with the bipartition class of the endpoint, so the corresponding edge has weight $1$ exactly when it contributes to the second profile coordinate.
  Using $A$ again in \cref{eq:prescribed-parity-cut}, we have $\omega_{P_Y}(xy)\equiv|A\cap X|\pmod2$.
  This value is $1$ when $xy$ has type $\mathsf O$ and $0$ when it has type $\mathsf S$.
  If $xy$ has type $\mathsf O$, then it does not contribute to the second profile coordinate but has weight $1$ under $\omega_{P_Y}$.
  Hence $s_{\omega_{P_Y}}(x)=\beta_T(x)+1$ and $s_{\omega_{P_Y}}(y)=\beta_T(y)+1$.
  If $xy$ has type $\mathsf S$, then it contributes to the second profile coordinate but has weight $0$.
  Hence $s_{\omega_{P_Y}}(x)=\beta_T(x)-1$ and $s_{\omega_{P_Y}}(y)=\beta_T(y)-1$.
  The edge $xy$ must again be improper, so $\beta_T(x)=\beta_T(y)$.
  Thus $\pi_T(x)=\pi_T(y)$ for every edge $xy\in E(J(T))$.
  Equality along the edges of each component implies that $\pi_T$ is constant on that component.

  We next show that every vertex has degree $1$ or $3$ in $J(T)$.
  By \cref{lem:profile-parity}, every vertex has odd degree in $J(T)$.
  Suppose for a contradiction that a component $H$ of $J(T)$ contains a vertex $v$ with $d=d_H(v)\geq5$.
  The profile equality proved above implies that $\pi_T$ is constant on $V(H)$.
  Write $(\alpha_H,\beta_H)$ for its common value.
  By \cref{eq:profile-osb}, the difference $d_T^{\mathsf O}(w)-d_T^{\mathsf S}(w)$ equals $\alpha_H-\beta_H$ for every $w\in V(H)$.
  The tree $H$ has a leaf, and at a leaf the left-hand side is either $1$ or $-1$.
  Hence $\alpha_H-\beta_H\in\{1,-1\}$.
  Among the $d$ edges of $J(T)$ incident with $v$, one of the two types therefore occurs $(d+1)/2$ times.
  Since $d\geq5$, choose three incident edges of $J(T)$ of this majority type, and denote their set by $C$.

  We construct an even set whose parity weighting is automatically proper outside three selected edges, which we then check directly.
  Let $U$ be the vertex set of the component containing $v$ in $T-C$.
  The three edges of $C$ are precisely the edges with exactly one endpoint in $U$.
  Since $|V(T)|$ is even, every branch corresponding to a nonskeleton edge has even order, while every branch corresponding to an edge of $J(T)$ has odd order.
  The set $U$ contains exactly $d-3$ branches corresponding to edges of $J(T)$ at $v$, and every other included branch has even order.
  Therefore $|U|\equiv1+(d-3)\equiv1\pmod2$, so $|U|$ is odd.

  Put $P=\side_T(v)\mathbin{\triangle}U$.
  Since both $\side_T(v)$ and $U$ have odd cardinality, $P$ has even cardinality.
  Consider the weighting $\omega_P$ from \cref{lem:prescribed-parity}.
  Every edge outside $C$ has both endpoints in $U$ or both endpoints outside $U$.
  Its endpoints therefore have opposite membership in $P$, so every edge outside $C$ is proper under $\omega_P$.

  Suppose first that the three edges of $C$ have type $\mathsf O$.
  For every incident edge of $v$ outside $C$, its branch lies in $U$, where $P$ agrees with $\overline{\side}_T(v)$.
  Such an edge therefore has weight $1$ exactly when its branch contributes to $\alpha_T(v)$, while each edge of $C$ contributes to $\alpha_T(v)$ but has weight $0$.
  Thus $s_{\omega_P}(v)=\alpha_H-3$.
  If $vw\in C$, then every branch at $w$ other than the branch containing $v$ lies outside $U$.
  On each of these branches, $P$ agrees with $\side_T(v)=\overline{\side}_T(w)$, so the corresponding edge has weight $1$ exactly when it contributes to $\alpha_T(w)$.
  As computed from the branch at $v$, the edge $vw$ has weight $0$; it also contributes to $\alpha_T(w)$.
  Since $\alpha_T(w)=\alpha_H$, we have $s_{\omega_P}(w)=\alpha_H-1$.
  Thus every edge of $C$ is proper.

  If the three edges of $C$ have type $\mathsf S$, then at $v$ they do not contribute to $\alpha_T(v)$ but have weight $1$, giving $s_{\omega_P}(v)=\alpha_H+3$.
  For $vw\in C$, the edges at $w$ other than $vw$ are counted as above, while $vw$ does not contribute to $\alpha_T(w)$ but has weight $1$.
  Hence $s_{\omega_P}(w)=\alpha_H+1$.
  Again every edge of $C$ is proper.

  In either case, $\omega_P$ is proper on every edge of $T$, contradicting the assumption.
  Therefore no vertex has degree at least $5$ in $J(T)$.
  Since all degrees in $J(T)$ are odd, every vertex of $J(T)$ has degree $1$ or $3$.

  For every component $H$ of $J(T)$, we have shown that every vertex has degree $1$ or $3$ in $H$ and that $\pi_T$ is constant on $V(H)$.
  By \cref{lem:odd-cut-matching-form}, each such $H$ belongs to $\Cfam$ and $\lambda_T$ is constant on $V(H)$.
  This proves \cref{item:odd-cut-blocks,item:odd-cut-load}.

  We now prove sufficiency.
  Assume \cref{item:odd-cut-parts,item:odd-cut-blocks,item:odd-cut-load}.
  Contract every component of $J(T)$ to one vertex, and let $Q$ be the resulting graph.
  The edges of $Q$ correspond to the nonskeleton edges of $T$ and inherit their orientation toward the odd--odd side.
  No nonskeleton edge has both endpoints in one component of $J(T)$, and no two such edges join the same pair of components, since either situation would create a cycle in $T$.
  Since $T$ is connected and acyclic, $Q$ is a tree.

  Let $\omega:E(T)\to\{0,1\}$ be arbitrary.
  The charging rule below selects a component of $J(T)$ for which nonskeleton edges directed away from the component have weight $1$ and those directed toward it have weight $0$, making their contribution to $s_\omega(v)$ equal to $\ell_T(v)$ at every vertex.
  For an oriented edge of $Q$ from a component $H$ to a component $H'$, charge $H$ when the corresponding edge of $T$ has weight $0$, and charge $H'$ when it has weight $1$.
  Every edge of $Q$ places one charge on exactly one endpoint.
  If $Q$ has one vertex, let $H$ be the component of $J(T)$ represented by that vertex; it receives no charge.
  Otherwise, since $Q$ is a tree, it has $|V(Q)|-1$ edges, so fewer than $|V(Q)|$ charges are distributed among its $|V(Q)|$ vertices.
  Some vertex of $Q$ therefore receives no charge; let $H$ be the corresponding component of $J(T)$.
  Every nonskeleton edge directed away from $H$ then has weight $1$, while every nonskeleton edge directed toward $H$ has weight $0$.
  Consequently, the total contribution of nonskeleton edges to $s_\omega(v)$ is $\ell_T(v)$ for every $v\in V(H)$.

  Define $\eta:E(H)\to\{-1,1\}$ by $\eta(e)=2\omega(e)-1$.
  For every $v\in V(H)$,
  \[
    s_\omega(v)
      =\ell_T(v)+\sum_{e\in E_H(v)}\omega(e)
      =\frac12\lambda_T(v)+\frac12s_\eta(v).
  \]
  The term $\frac12\lambda_T(v)$ is constant on $V(H)$ by \cref{item:odd-cut-load}.
  Since $H\in\Cfam$ by \cref{item:odd-cut-blocks}, \cref{thm:opposite} says that no $\{-1,1\}$-edge-weighting of $H$ is proper.
  In particular, the weighting $\eta$ is not proper, so some edge $uv\in E(H)$ satisfies $s_\eta(u)=s_\eta(v)$.
  The displayed identity then gives $s_\omega(u)=s_\omega(v)$.
  Thus the arbitrary weighting $\omega$ is not proper, and $T$ has no proper $\{0,1\}$-edge-weighting.
\end{proof}

Expanding membership in $\Cfam$ and the adjusted-load condition gives the following leaf--cubic form.

\begin{corollary}\label{cor:odd-cut-matching-characterization}
  Let $T$ be a tree whose two bipartition classes have odd order.
  Then $T$ has no proper $\{0,1\}$-edge-weighting if and only if, for every component $H$ of $J(T)$,
  \begin{enumerate}[label=(\roman*),ref=(\roman*),leftmargin=*]
    \item every vertex of $H$ has degree $1$ or $3$ in $H$,
    \item $\degthree(H)$ has a perfect matching, and
    \item \cref{eq:odd-cut-load} holds for some integer $k_H\geq0$.
  \end{enumerate}
\end{corollary}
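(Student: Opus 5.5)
This corollary follows by combining \cref{thm:odd-cut-skeleton} with \cref{lem:odd-cut-matching-form}; no new construction is needed. Since the bipartition classes of $T$ are assumed to have odd order, condition \cref{item:odd-cut-parts} of \cref{thm:odd-cut-skeleton} holds automatically. Hence $T$ has no proper $\{0,1\}$-edge-weighting exactly when, for every component $H$ of $J(T)$, we have $H\in\Cfam$ and $\lambda_T$ is constant on $V(H)$. It therefore suffices to show, component by component, that this pair of conditions is equivalent to the three listed conditions (i)--(iii).

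For the forward direction, assume $H\in\Cfam$ and $\lambda_T$ is constant on $V(H)$. Membership in $\Cfam$ gives (i) and (ii) directly. \Cref{lem:odd-cut-matching-form}, in the direction \cref{item:load-compressed}$\Rightarrow$\cref{item:profile-compressed}, supplies the integer $k_H\ge0$ with common load $2k_H+1$ and yields \cref{eq:odd-cut-load}, which is (iii).

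For the converse, assume (i)--(iii). Conditions (i) and (ii) say exactly that $H\in\Cfam$. It remains to show that $\lambda_T$ is constant on $V(H)$. For a leaf $v$ of $H$, \cref{eq:odd-cut-load} gives
\[
  \lambda_T(v)=d_H(v)+2\ell_T(v)=1+2k_H.
\]
For a degree-$3$ vertex $v$ of $H$, it gives
\[
  \lambda_T(v)=3+2(k_H-1)=2k_H+1.
\]
Here we use that $d_{J(T)}(v)=d_H(v)$, since $H$ is a component of the spanning forest $J(T)$. By (i) these two cases cover every vertex of $H$, so $\lambda_T\equiv2k_H+1$ on $V(H)$. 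Applying \cref{thm:odd-cut-skeleton} then finishes the proof.

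The only point needing care is that (iii) quantifies over some integer $k_H$, not over the one determined by the load. The direct computation above removes any dependence on how $k_H$ was chosen, so the main obstacle is purely bookkeeping. One should also note the edge case where $H$ is a single isolated vertex. This case cannot occur, because by \cref{lem:profile-parity} every vertex has odd degree in $J(T)$.
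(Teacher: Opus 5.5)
Your proposal is correct and follows the paper's own proof: conditions (i)--(ii) restate $H\in\Cfam$, and under the degree condition, constancy of $\lambda_T=d_H+2\ell_T$ on $V(H)$ is equivalent to \cref{eq:odd-cut-load}, after which \cref{thm:odd-cut-skeleton} applies. Citing \cref{lem:odd-cut-matching-form} for the first direction is valid, but the same short computation you use for the converse also handles it directly: the common load is odd because $d_H(v)\in\{1,3\}$, and it is positive, so it equals $2k_H+1$ for some $k_H\ge0$.
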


\begin{proof}
  The first two conditions state that $H\in\Cfam$.
  Under the degree condition, constancy of $\lambda_T(v)=d_H(v)+2\ell_T(v)$ on $V(H)$ is equivalent to \cref{eq:odd-cut-load}.
  The result therefore follows from \cref{thm:odd-cut-skeleton}.
\end{proof}

\subsection{Consequences and algorithms}\label{sec:zero-consequences}

\begin{corollary}\label{cor:zero-order}
  If a tree $T$ has no proper $\{0,1\}$-edge-weighting and $X\cup Y$ is its bipartition, then $|X|=|Y|\equiv1\pmod2$ and $|V(T)|\equiv2\pmod4$.
\end{corollary}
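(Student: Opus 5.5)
By \cref{thm:odd-cut-skeleton}, a tree without a proper $\{0,1\}$-edge-weighting satisfies \cref{item:odd-cut-parts,item:odd-cut-blocks}: both $|X|$ and $|Y|$ are odd, and every component $H$ of $J(T)$ belongs to $\Cfam$. The components of $J(T)$ partition $V(T)$, because $J(T)$ is spanning. Each $H$ is a subtree of $T$, so its bipartition is the restriction of $X\cup Y$. It therefore suffices to prove one purely structural claim: every tree $H\in\Cfam$ is \emph{balanced}, meaning its two bipartition classes have equal size. Summing over the components then gives $|X|=|Y|$. Since this common value is odd, $|V(T)|=2|X|\equiv2\pmod4$.

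I would prove the balance claim by induction on $|V(H)|$, using the recursive structure established in the proof of \cref{lem:rooted-opposite}. Fix a leaf $r$ of $H$ with neighbor $v$. The base case is $H\cong K_2$, which is balanced. Otherwise \cref{eq:rooted-one} and \cref{eq:state-one-recurrence} show that $d_H(v)=3$, with branches $H_1,H_2$ (each with an artificial root leaf added) such that $H_1\in\Cfam$ and $H_2$ is in state $\{3\}$. By \cref{eq:state-three-recurrence}, $H_2$ in turn consists of its root leaf $r_2$, the vertex $u_2$, and two branches $S_1,S_2\in\Cfam$ rooted at artificial leaves $s_1,s_2$. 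All of $H_1,S_1,S_2$ are smaller than $H$, so they are balanced by induction.

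The bookkeeping for the induction step is as follows.
\begin{enumerate}[label=(\alph*),leftmargin=*]
  \item In $H$, the artificial leaf $r_1$ of $H_1$ is replaced by $v$, and $r_1$ lies in the class of $v$. Hence $\{v\}\cup(V(H_1)\setminus\{r_1\})$ is balanced.
  \item Each $s_i$ lies in the class of $u_2$, which is the class opposite to $v$. So each $V(S_i)\setminus\{s_i\}$ has a surplus of exactly one vertex in the class of $v$, for a total surplus of two.
  \item The remaining vertices are $u_2$ and $r$. Both lie in the class opposite to $v$, and they cancel this surplus of two.
\end{enumerate}
Hence $H$ is balanced, completing the induction.

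The main obstacle is keeping the classes of the artificial leaves straight when branches are glued together. The argument above handles this by always comparing each artificial leaf with the real vertex it stands in for. An alternative I would consider is to remove a perfect matching $M$ of $\degthree(H)$ and count along the resulting paths. However, that approach requires a separate parity argument for the lengths of those paths, so I prefer the recursive route, which reuses \cref{eq:state-one-recurrence,eq:state-three-recurrence} directly.
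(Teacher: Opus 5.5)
Your proof is correct. The reduction to the claim that every $H\in\Cfam$ has two bipartition classes of equal size is the same as the paper's. The difference is how you prove that claim.

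\textbf{The paper's route.} It replaces the induction with a two-line double count. Write $n_i^X$ and $n_i^Y$ for the numbers of degree-$i$ vertices of $H$ in each class. A perfect matching of $\degthree(H)$ pairs degree-$3$ vertices across the classes, so $n_3^X=n_3^Y$. Every edge of $H$ has one endpoint in each class, so $n_1^X+3n_3^X=|E(H)|=n_1^Y+3n_3^Y$, and hence $n_1^X=n_1^Y$. This uses only the definition of $\Cfam$ and involves no artificial leaves. Combined with the identity $n_1=n_3+2$ for trees with all degrees in $\{1,3\}$, it even shows that each class of $H$ has odd order $2n_3^X+1$.

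\textbf{Your route.} You unwind \cref{eq:state-one-recurrence,eq:state-three-recurrence} two levels deep, and the bookkeeping in (a)--(c) checks out:
\begin{itemize}
\item $r_1$ and $v$ are both adjacent to $u_1$, so they lie in the same class;
\item each $s_i$ shares a neighbor with $u_2$, so it lies in the class of $u_2$;
\item $r$ and $u_2$ are both neighbors of $v$.
\end{itemize}
The pieces $H_1,S_1,S_2$ are strictly smaller members of $\Cfam$, so the induction is sound.

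\textbf{Comparison.} Your approach makes the recursive shape of $\Cfam$ explicit. However, it imports the $\{-1,1\}$ machinery of \cref{lem:rooted-opposite} to prove a purely combinatorial fact. You could avoid that dependence by splitting $H$ directly at the matching edge incident with $v$, as in the converse half of that lemma's proof. Your fallback idea of deleting the matching and counting along paths is also unnecessary, because the degree-sum identity needs no parity argument at all.
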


\begin{proof}
  Let $H$ be a component of $J(T)$.
  By \cref{item:odd-cut-blocks} of \cref{thm:odd-cut-skeleton}, every vertex of $H$ has degree $1$ or $3$ in $H$, and $\degthree(H)$ has a perfect matching.
  For $i\in\{1,3\}$, let $n_i^X$ and $n_i^Y$ be the numbers of degree-$i$ vertices of $H$ in $X$ and $Y$, respectively.
  The perfect matching gives $n_3^X=n_3^Y$.
  Since the degree sums over the two bipartition classes both equal $|E(H)|$, we also have $n_1^X+3n_3^X=n_1^Y+3n_3^Y$.
  Hence $n_1^X=n_1^Y$.
  A tree in which every vertex has degree $1$ or $3$ has two more leaves than degree-$3$ vertices, so $n_1^X+n_1^Y=n_3^X+n_3^Y+2$.
  Together with the preceding equalities, this gives $n_1^X=n_1^Y=n_3^X+1$.
  Both bipartition classes of $H$ therefore have the same odd order $2n_3^X+1$.
  Since $J(T)$ is spanning, summing over its components gives $|X|=|Y|$.
  By \cref{item:odd-cut-parts} of \cref{thm:odd-cut-skeleton}, their common value is odd, which gives the congruence for $|V(T)|$.
\end{proof}

\begin{corollary}\label{cor:odd-cut-algorithm}
  Given an $n$-vertex tree $T$, one can decide whether $T$ has a proper $\{0,1\}$-edge-weighting and, if so, construct one in $O(n)$ time using $O(n)$ space.
\end{corollary}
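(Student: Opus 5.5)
The plan has two phases: compute all the quantities the criterion of \cref{thm:odd-cut-skeleton} needs in linear time, and then, whenever the criterion fails, turn the specific failing condition into an explicit parity set $P$ and output $\omega_P$ from \cref{lem:prescribed-parity}.

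For the computation, root $T$ at an arbitrary vertex and do one postorder pass. This gives, for each vertex $u$ with parent $p$, the counts $|V(T_{u,p})\cap X|$ and $|V(T_{u,p})\cap Y|$; the complementary branch at $u$ is obtained by subtracting from $|X|$ and $|Y|$. From these parities we classify every edge as a skeleton edge of type $\mathsf O$ or $\mathsf S$, or as a nonskeleton edge together with its orientation. We then compute $d_{J(T)}$, $\ell_T$, $\lambda_T$, and $\pi_T$ for all vertices, and find the components of $J(T)$ by a traversal. On each component $H$ we test membership in $\Cfam$ using the leaf-peeling matching procedure described before \cref{cor:opposite-algorithm}, and we test whether $\lambda_T$ is constant. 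All of this takes $O(n)$ time and space. If all three conditions hold, we report that no proper weighting exists; correctness of this answer is \cref{thm:odd-cut-skeleton}.

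Computing $\omega_P$ itself is linear. For each nonroot vertex $u$ with parent $p$, the postorder count $|V(T_{u,p})\cap P|$ determines $\omega_P(up)$ by \cref{eq:prescribed-parity-cut}. The construction depends on which condition fails:
\begin{itemize}
\item If $|X|$ is even, output $\omega_X$; if $|Y|$ is even, output $\omega_Y$. Both are proper, exactly as in the necessity proof.
\item If both classes are odd but the conditions fail on some component $H$, then by \cref{lem:odd-cut-matching-form} either some vertex of $H$ has degree at least $5$, or $\pi_T$ is not constant on $H$. (All degrees in $J(T)$ are odd by \cref{lem:profile-parity}, so degree outside $\{1,3\}$ means degree at least $5$.)
\begin{itemize}
\item If $\pi_T$ is not constant on $H$, some skeleton edge $xy$ of $H$ has $\pi_T(x)\neq\pi_T(y)$, and we find it by scanning the edges of $H$. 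If $\alpha_T(x)\neq\alpha_T(y)$, the computation in the necessity proof shows that $\omega_{P_X}$ with $P_X=X\mathbin{\triangle}V(A)$ is proper. Otherwise $\beta_T(x)\neq\beta_T(y)$, and $\omega_{P_Y}$ with $P_Y=Y\mathbin{\triangle}V(A)$ is proper. Here $A$ is the component of $T-xy$ containing $x$, and membership in $V(A)$ is read off the rooted structure in $O(n)$.
\item If $\pi_T$ is constant on $H$ but some vertex $v$ of $H$ has degree $d\geq5$, we pick three skeleton edges at $v$ of the majority type, which exist since $\alpha_H-\beta_H\in\{1,-1\}$. We take $U$ as in the proof and set $P=\side_T(v)\mathbin{\triangle}U$; the resulting $\omega_P$ is proper.
\end{itemize}
\end{itemize}

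The main obstacle I expect is justifying the case split. The necessity proof is written by contradiction, and it uses constancy of $\pi_T$ on $H$ in the degree-$5$ argument. I would reorganize the case analysis as: first check for any skeleton edge with unequal profiles, and only when none exists use the high-degree construction, where constancy is then available. Finally, if every component has all degrees in $\{1,3\}$ and constant $\pi_T$, then \cref{lem:odd-cut-matching-form} gives the stated conditions. So one of the constructive cases always applies whenever the criterion fails, and each case costs a constant number of linear passes.
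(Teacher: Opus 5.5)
Your proposal is correct and follows the paper's proof essentially step for step: the same linear-time computation of subtree class counts, skeleton types, loads and profiles, the same per-component degree, matching and load tests, and the same certificate-driven choice of $P$. That choice is $P=Z$ for an even class, $P_X$ or $P_Y$ for a profile-mismatch skeleton edge, and $\side_T(v)\mathbin{\triangle}U$ for a high-degree vertex once profiles are known to be constant on its component, which is exactly the paper's priority of mismatch edges over degree certificates. The one ordering detail to make explicit is that you should test the parities of $|X|$ and $|Y|$ before orienting nonskeleton edges and assigning $\mathsf O$/$\mathsf S$ types, since both are well defined only when both classes are odd.
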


\begin{proof}
  Root $T$ and compute the numbers of vertices from each bipartition class in every rooted subtree.
  These values determine the parities of the two bipartition classes and, for both directions of every edge, the corresponding branch parities.
  If a bipartition class $Z\in\{X,Y\}$ has even order, set $P=Z$.
  The endpoints of every edge then have opposite membership in $P$, so the weighting $\omega_P$ from \cref{lem:prescribed-parity} is proper.
  One further pass computes $\omega_P$ from \cref{eq:prescribed-parity-cut}, completing this case.

  Assume from now on that both bipartition classes have odd order.
  Construct $J(T)$, orient every nonskeleton edge toward its odd--odd side, and compute $\ell_T(v)$ and $\lambda_T(v)$ for every vertex.
  For each component $H$ of $J(T)$, test whether every degree is $1$ or $3$ and whether $\lambda_T$ is constant on $V(H)$.
  When the degree condition holds, test whether $\degthree(H)$ has a perfect matching using the leaf-matching procedure described before \cref{cor:opposite-algorithm}.
  The degree and load tests process each vertex a constant number of times, and the forests $\degthree(H)$ are vertex-disjoint, so all component tests take total $O(n)$ time.
  If every component passes these tests, \cref{thm:odd-cut-skeleton} shows that no proper $\{0,1\}$-edge-weighting exists.

  It remains to construct a proper weighting when one of these tests fails.
  We use the branch-profile certificates developed in the necessity proof.
  The stored branch parities determine the type of every skeleton edge and all profiles $\pi_T(v)$ in one further scan.
  Retain the first skeleton edge $xy$ with $\pi_T(x)\neq\pi_T(y)$, and retain the first vertex $v$ with $d_{J(T)}(v)\notin\{1,3\}$.
  A failed degree test directly supplies the latter certificate.
  If a component $H$ passes the degree test but fails either the matching test or the adjusted-load test, then \cref{item:load-compressed} of \cref{lem:odd-cut-matching-form} fails.
  By the equivalence in that lemma, and because every degree in $H$ is $1$ or $3$, the profile is not constant on $V(H)$.
  Since $H$ is connected, some edge $xy\in E(H)$ satisfies $\pi_T(x)\neq\pi_T(y)$ and supplies the former certificate.
  Hence failure of a component test produces at least one of the two retained certificates.

  Suppose first that a profile-mismatch edge $xy\in E(J(T))$ was retained, with $x\in X$ and $y\in Y$.
  Let $A$ be the component of $T-xy$ containing $x$.
  If $\alpha_T(x)\neq\alpha_T(y)$, set $P=P_X=X\mathbin{\triangle}V(A)$.
  Otherwise, $\beta_T(x)\neq\beta_T(y)$, so set $P=P_Y=Y\mathbin{\triangle}V(A)$.
  The set $P$ has even cardinality, and every edge other than $xy$ is proper by parity.
  Under $\omega_{P_X}$, the two endpoint sums on $xy$ are $\alpha_T(x)-1$ and $\alpha_T(y)-1$ when $xy$ is an $\mathsf O$-edge, and $\alpha_T(x)+1$ and $\alpha_T(y)+1$ when it is an $\mathsf S$-edge.
  Under $\omega_{P_Y}$, they are $\beta_T(x)+1$ and $\beta_T(y)+1$ when $xy$ is an $\mathsf O$-edge, and $\beta_T(x)-1$ and $\beta_T(y)-1$ when it is an $\mathsf S$-edge.
  The selected coordinate inequality therefore makes $xy$ proper as well.

  Suppose instead that no profile-mismatch edge was retained.
  Then the profile is constant on every component of $J(T)$, and the retained degree certificate is a vertex $v$ in a component $H$ with $d=d_H(v)\notin\{1,3\}$.
  By \cref{lem:profile-parity}, $d$ is odd, so $d\geq5$.
  Write $(\alpha_H,\beta_H)$ for the common profile on $H$.
  If $w$ is a leaf of $H$, then $d_T^{\mathsf O}(w)-d_T^{\mathsf S}(w)$ is $1$ or $-1$.
  By profile constancy and \cref{eq:profile-osb}, the same difference holds at $v$, so one of the two skeleton-edge types occurs $(d+1)/2\geq3$ times there.
  Let $C$ consist of three incident edges of this majority type.
  Let $U$ be the vertex set of the component containing $v$ in $T-C$; one traversal of $T-C$ finds this set.
  Set $P=\side_T(v)\mathbin{\triangle}U$.
  Every included nonskeleton branch has even order, while the $d-3$ included skeleton branches have odd order, so $|U|\equiv1+(d-3)\equiv1\pmod2$.
  Since $\side_T(v)$ has odd cardinality, $P$ has even cardinality.
  Every edge outside $C$ is proper by parity.
  If the edges of $C$ have type $\mathsf O$, then $s_{\omega_P}(v)=\alpha_H-3$ and $s_{\omega_P}(w)=\alpha_H-1$ for each $vw\in C$.
  If they have type $\mathsf S$, the corresponding values are $\alpha_H+3$ and $\alpha_H+1$.
  Thus every edge of $C$ is proper as well.

  For the selected set $P$, one final pass over the rooted tree computes $\omega_P$ from \cref{eq:prescribed-parity-cut} using subtree parities.
  Every scan processes each vertex and edge only a constant number of times, and the matching tests over the components of $J(T)$ take total linear time.
  Hence the total time and space are $O(n)$.
\end{proof}

\begin{proof}[Proof of \cref{thm:main}]
  The three regimes are exhaustive and pairwise disjoint.
  The generic condition excludes the other two, while $a+b=0$ and $ab=0$ together would give $a=b=0$, contrary to the assumption that the weights are distinct.
  Statement \cref{case:generic} is \cref{thm:generic}; after scaling the weights, statements \cref{case:opposite,case:zero} are \cref{thm:opposite,thm:odd-cut-skeleton}, respectively.
\end{proof}

Combining the three constructive arguments gives the corresponding algorithm for each fixed pair of distinct weights.

\begin{corollary}\label{cor:complete-algorithm}
  For every fixed pair of distinct real weights $a,b$, given an $n$-vertex tree $T$, one can decide whether $T$ admits a proper $\{a,b\}$-edge-weighting and, if so, construct one in $O(n)$ time using $O(n)$ space.
\end{corollary}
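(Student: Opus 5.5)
The plan is a dispatch argument that reduces the corollary to \cref{cor:linear-algorithm,cor:opposite-algorithm,cor:odd-cut-algorithm}. Since $a$ and $b$ are fixed constants, we determine once, in $O(1)$ time and independently of $T$, which of the three mutually exclusive regimes of \cref{thm:main} applies. The inputs $n=1$ and $T\cong K_2$ are handled first by a constant-time check: the one-vertex tree is reported as weighted by the empty map, and $K_2$ is reported as having no proper weighting for every pair.

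In the generic regime $ab(a+b)\neq0$, \cref{thm:generic} shows that every remaining tree admits a proper weighting. We construct one with \cref{cor:linear-algorithm}, computing weighted degrees in the fixed representation from the introduction, so each comparison in \cref{eq:new-constraints} is a constant number of word operations.

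In the opposite-weight regime, write $a=-b$ and, after possibly exchanging the names, assume $b>0$. A weighting $\omega$ is proper for $\{-b,b\}$ exactly when $\omega/b$ is a proper $\{-1,1\}$-edge-weighting, since scaling by the nonzero constant $b$ preserves all equalities and inequalities of weighted degrees. We run \cref{cor:opposite-algorithm} and multiply its output by $b$.

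In the zero-weight regime, one weight is $0$ and the other is some $c\neq0$. The map $\{0,1\}\to\{0,c\}$ given by $t\mapsto ct$ again preserves properness in both directions, so we run \cref{cor:odd-cut-algorithm} and relabel each weight $1$ as $c$.

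The translation steps take $O(n)$ time and space, so the total cost is $O(n)$. The one point that needs care is the computational model: weighted degrees must fit in $O(\log n)$-bit words. For the $\{-1,1\}$ and $\{0,1\}$ subroutines the values lie in $[-n,n]$. For the generic case the introduction's representation already guarantees $O(\log n)$-bit values because $p$ and $q$ are fixed, and in the $\Q$-independent case the ordered pairs of counts likewise fit in $O(\log n)$ bits. I do not expect any step to be a genuine obstacle, since the corollary is a bookkeeping consequence of the three regime-specific algorithms together with the scaling invariance noted in the introduction.
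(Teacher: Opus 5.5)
Your proposal is correct and follows essentially the same route as the paper. The paper also dispatches on the fixed regime, calls \cref{cor:linear-algorithm}, \cref{cor:opposite-algorithm}, or \cref{cor:odd-cut-algorithm}, and rescales the output, since scaling preserves properness. Your up-front handling of $n=1$ and $K_2$ and your word-size remarks only add bookkeeping to that same argument.
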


\begin{proof}
  Apply the construction corresponding to the fixed regime containing $\{a,b\}$.
  In the generic regime, use \cref{cor:linear-algorithm}, with $K_2$ handled directly.
  In the opposite-weight regime, use \cref{cor:opposite-algorithm} and scale the resulting $\{-1,1\}$-edge-weighting.
  In the zero-weight regime, use \cref{cor:odd-cut-algorithm} and scale the resulting $\{0,1\}$-edge-weighting.
  Scaling the weights preserves properness.
  Any final scaling takes constant time per edge.
\end{proof}

\section*{Acknowledgement}

This work was supported by the Ministry of Innovation and Technology NRDI Office within the framework of the Artificial Intelligence National Laboratory Program; by the Ministry of Innovation and Technology of Hungary from the National Research, Development and Innovation Fund, financed under the ELTE TKP 2021-NKTA-62 funding scheme; and by the grant NKFIH-154121.

\bibliographystyle{plain}
\bibliography{bibliography}

\end{document}